\newtheorem{theorem}{Theorem}[section]
\newtheorem{corollary}[theorem]{Corollary}
\newtheorem{definition-lemma}[theorem]{Definition-Lemma}
\theoremstyle{definition}
\newtheorem{definition}[theorem]{Definition}
\theoremstyle{remark}
\newtheorem{remark}[theorem]{Remark}
\title[Supersingular Abelian Varieties]{ Characteristic Polynomial of  Supersingular Abelian Varieties over Finite Fields}
\author{Vijaykumar Singh, Gary McGuire, Alexey Zaytsev}
\begin{document}
\maketitle

\begin{abstract}  In this article, we give a complete description of the characteristic polynomials of supersingular abelian varieties over finite fields. We list them for the dimensions upto  $7$.
\end{abstract}
\section{Introduction}
Supersingular abelian varieties have applications in cryptography  and coding theory and related areas. Identity based ecryption and computation of weights of some Reed Muller codes are some of them. An important (isogeny) invariant which carries most of the information about supersingular curves is the characteristic polynomial of the Frobenius endomorphism.  Here, we give a complete description of the characteristic polynomials of supersingular abelian varieties over finite fields. We list them for the dimensions upto  $7$.

\section{Background}

\begin{definition}
Let $A$ be an abelian variety of dimension $g$ over $\mathbb{F}_q$, where $q =p^n$. For $l \neq p$, \emph{the characteristic polynomial} of Frobenius endomorphism $\pi_A$ is defined as
\begin{displaymath}
P_A(X):=\det(\pi_A-XId|_{V_l(A)}),
\end{displaymath}

\end{definition}
The above definition is independent of choice of $l$. The coefficients of  $P_A(X)$ are in $\mathbb{Z}$. Moreover, $P_A(X)$ can be represented as  
\begin{displaymath}
P_A(X) = X^{2g} + a_{1}X^{2g-1} + \cdots+a_iX^{2g-i}+ \cdots+ a_gX^g +qa_{g-1}X^{g-1}+\cdots+q^{g-i}a_iX^i+ \cdots + q^g.
\end{displaymath}

\begin{definition}
A \emph{Weil-$q$-number} $\pi$ is defined to be an algebraic number such that, for every embedding
$\sigma:\mathbb{Q}[\pi] \hookrightarrow \mathbb{C}$, $|\sigma \pi|=q^\frac{1}{2}$ holds.
\end{definition}
For $q=p^n$, the real Weil polynomials are $X^2-q$ when $n$ is odd and $X-\sqrt{q}$ when $n$ is even. Otherwise, the set of  roots of $P_A(X)$ has the form $\{\omega_1,\overline{\omega_1},\dots ,\omega_g, \overline{\omega_g}\}$, where the $\omega_i$'s  are Weil-$q$-numbers. A monic polynomial with integer coefficients which satisfies this condition is called a \emph{Weil polynomial}. Thus, every Weil polynomial of degree $2g$ has the form
above for certain integers $a_i \in \mathbb{Z}$. The converse is false; indeed, since the absolute value of the roots of $P(X)$ is prescribed (equal to $\sqrt q$), its coefficients have to be bounded.

An abelian variety $A$ is \emph{$k$-simple} if it is not isogenous to a product of abelian varieties of lower dimensions over $k$.
 In that case, $P_A(X)$ is either irreducible over $\mathbb{Z}$ or $P_A(X)=h(X)^e$, where $h(X)\in \mathbb{Z}[X]$ is an irreducible, monic polynomial over $\mathbb{Z}$; see Milne and Waterhouse in \cite{Waterhouse}.
 We have the following result from Tate \cite{Tate}.
 \begin{theorem} \label{Tate}Let $A$ and $B$ be abelian varieties defined over $\mathbb{F}_q$. Then $A$ is $\mathbb{F}_q$-isogenous to an abelian subvariety of $B$ if and only if $P_A(X)$ divides $P_B(X)$ in $\mathbb{Q}[X]$. In particular, $P_A(X) = P_B(X)$ if and only if $A$ and $B$ are $\mathbb{F}_q$-isogenous.
 \end{theorem}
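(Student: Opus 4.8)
The plan is to prove the two implications separately and then deduce the ``in particular'' clause. Throughout I view $P_A(X)$ as the characteristic polynomial of the Frobenius $\pi_A$ acting on the $2g$-dimensional $\mathbb{Q}_l$-vector space $V_l(A) = T_l(A)\otimes \mathbb{Q}_l$, and I write $R = \mathbb{Q}_l[\pi]$ for the commutative $\mathbb{Q}_l$-algebra generated by Frobenius, so that $V_l(A)$ and $V_l(B)$ become finitely generated $R$-modules once we let $\pi$ act through $\pi_A$, $\pi_B$ respectively (equivalently, $\mathbb{Q}_l[X]$-modules with $X$ acting as Frobenius).

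For the easy direction, suppose $A$ is $\mathbb{F}_q$-isogenous to an abelian subvariety $A' \subseteq B$. An isogeny induces a Frobenius-equivariant isomorphism $V_l(A)\cong V_l(A')$, so $P_A = P_{A'}$. The inclusion $A'\hookrightarrow B$ sits in an exact sequence $0\to A'\to B\to B/A'\to 0$, which yields a short exact sequence of Tate modules compatible with Frobenius and hence the factorisation $P_B = P_{A'}\cdot P_{B/A'}$. Therefore $P_A = P_{A'}$ divides $P_B$.

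The substantive direction is the converse, and here I would invoke the two deep inputs of Tate, which are also where the main obstacle lies since they are not elementary: (i) the \emph{isogeny theorem}, that the natural map $\mathrm{Hom}_{\mathbb{F}_q}(A,B)\otimes \mathbb{Q}_l \to \mathrm{Hom}_R(V_l(A),V_l(B))$ is an isomorphism, and (ii) the \emph{semisimplicity} of Frobenius, that $V_l(A)$ is a semisimple $R$-module. Granting these, assume $P_A\mid P_B$ in $\mathbb{Q}[X]$, hence in $\mathbb{Q}_l[X]$. By semisimplicity each of $V_l(A)$, $V_l(B)$ decomposes as a direct sum of simple $\mathbb{Q}_l[X]$-modules $\mathbb{Q}_l[X]/(p)$ indexed by the monic irreducible factors $p$ of the respective characteristic polynomial, each such summand contributing exactly the factor $p$. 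Thus the multiplicity of every irreducible $p$ in $V_l(A)$ is at most its multiplicity in $V_l(B)$, and one obtains an injective $R$-module homomorphism $u\colon V_l(A)\hookrightarrow V_l(B)$ realising $V_l(A)$ as a direct summand.

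It then remains to realise $u$ by an actual homomorphism of abelian varieties. By the isogeny theorem $u$ lies in $\mathrm{Hom}_{\mathbb{F}_q}(A,B)\otimes \mathbb{Q}_l$, and the finite-dimensional $\mathbb{Q}$-vector space $\mathrm{Hom}_{\mathbb{F}_q}(A,B)\otimes \mathbb{Q}$ is dense in it. Injectivity of the realisation $V_l(f)$ is an open condition (nonvanishing of a suitable $2g\times 2g$ minor) which $u$ satisfies, so by density there is $f\in \mathrm{Hom}_{\mathbb{F}_q}(A,B)\otimes \mathbb{Q}$ with $V_l(f)$ injective; clearing denominators gives an honest homomorphism $f\colon A\to B$ with $V_l(f)$ injective, hence with finite kernel. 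Then $f$ maps $A$ onto an abelian subvariety $\mathrm{Im}(f)\subseteq B$ of dimension $\dim A$, and $A\to \mathrm{Im}(f)$ is an isogeny, proving the claim. Finally, for the ``in particular'' statement: if $A$ and $B$ are isogenous the first direction gives $P_A\mid P_B$ and $P_B\mid P_A$, and as both are monic of equal degree $P_A=P_B$; conversely $P_A=P_B$ forces $\dim A=\dim B$ and, by the converse direction, $A$ is isogenous to a subvariety $A'\subseteq B$ with $\dim A'=\dim B$, whence $A'=B$ and $A$ and $B$ are isogenous.
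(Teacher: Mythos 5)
Your proof is correct, but there is nothing in the paper to compare it against: the paper states this theorem as a quoted result from Tate's article \cite{Tate} and supplies no proof at all. What you have reconstructed is essentially Tate's own argument. The forward direction via exactness of the Tate module functor on $0\to A'\to B\to B/A'\to 0$ (using divisibility of $A'$ to get surjectivity on $l^n$-torsion) is sound, as is the converse: semisimplicity of Frobenius gives the decompositions $V_l(A)\cong\bigoplus_i\bigl(\mathbb{Q}_l[X]/(p_i)\bigr)^{m_i}$, divisibility $P_A\mid P_B$ translates into an inequality of multiplicities and hence a Frobenius-equivariant injection $u\colon V_l(A)\hookrightarrow V_l(B)$, and the isogeny theorem $\mathrm{Hom}_{\mathbb{F}_q}(A,B)\otimes\mathbb{Q}_l\cong \mathrm{Hom}_R(V_l(A),V_l(B))$ together with density of $\mathrm{Hom}_{\mathbb{F}_q}(A,B)\otimes\mathbb{Q}$ and openness of the injectivity locus produces an honest homomorphism $f\colon A\to B$ with finite kernel, so that $A\to \mathrm{Im}(f)$ is an isogeny onto an abelian subvariety. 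You are also right to flag that the two inputs (i) and (ii) are the genuine depth here and cannot be made elementary; your deduction built on top of them, including the closing observation that an abelian subvariety of $B$ of dimension $\dim B$ must equal $B$, is complete and correct.
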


Let $W(q)$ be the set of Weil-$q$-number in $\mathbb{C}$. We say that
two elements $\pi$ and $\pi^{'}$ are \emph{conjugate}, if $\pi$ and $\pi^{'}$ have the same minimum polynomial (called \emph{Weil polynomial}) over $\mathbb{Q}$.

The conjugacy class of $\pi_A$ depends only on the isogeny class of $A$, more precisely, we have the following theorem from \cite{J.Tate}.

\begin{theorem}\emph{(Honda-Tate theorem)}\label{H-T}
 The map $A \rightarrow \pi_A$ defines a bijection
\begin{displaymath} 
\{simple~abelian~varieties/ \mathbb{F}_q\}/(isogeny)\mapsto W(q)/ (conjugacy).
\end{displaymath}
\end{theorem}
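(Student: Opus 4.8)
The plan is to establish the bijection in three stages: well-definedness of $A \mapsto \pi_A$ on isogeny classes (landing in $W(q)/(\mathrm{conjugacy})$), injectivity, and surjectivity. The first stage is essentially Weil's Riemann hypothesis for abelian varieties over finite fields: every root of $P_A(X)$ has absolute value $q^{1/2}$ under each complex embedding, so $\pi_A$ is a Weil $q$-number, and by Theorem \ref{Tate} the polynomial $P_A$, hence the conjugacy class of $\pi_A$, depends only on the isogeny class of $A$. For a simple $A$ the image is the conjugacy class of a Weil number whose minimal polynomial $h$ satisfies $P_A = h^{e}$, so the map indeed makes sense on the left-hand set.

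For injectivity, I would take simple varieties $A$ and $B$ with $\pi_A$ conjugate to $\pi_B$, i.e.\ sharing a common minimal polynomial $h(X)$ over $\mathbb{Q}$, so that $P_A = h^{e_A}$ and $P_B = h^{e_B}$; by Theorem \ref{Tate} it suffices to prove $e_A = e_B$. The exponent is pinned down by the endomorphism algebra: writing $E = \mathrm{End}^0(A)$, a division algebra with center $F = \mathbb{Q}(\pi_A)$, one has $2\dim A = e_A\,[F:\mathbb{Q}]$ with $e_A = [E:F]^{1/2}$, and $e_A$ equals the least common multiple of the denominators of the local Hasse invariants $\mathrm{inv}_v(E)$. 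I would then invoke Tate's computation of these invariants purely in terms of $\pi_A$: the invariant at a place $v \mid p$ of $F$ is $\bigl(v(\pi_A)/v(q)\bigr)\,[F_v:\mathbb{Q}_p] \pmod 1$, it is $1/2$ at real places and $0$ at complex and at $\ell$-adic places with $\ell \neq p$. Since these depend only on the conjugacy class of $\pi_A$, we obtain $e_A = e_B$, whence $P_A = P_B$ and $A$ is isogenous to $B$.

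Surjectivity is the substantive part, due to Honda, and I would prove it by complex multiplication. Given a Weil $q$-number $\pi$, after replacing $q = p^n$ by a suitable power $q^m$ (equivalently $\pi$ by $\pi^m$) I would construct an abelian variety with CM by a CM-field $K \supseteq \mathbb{Q}(\pi)$ over a number field with good reduction at a prime above $p$; the Shimura--Taniyama formula then computes the Frobenius of the reduction in terms of the chosen CM type, and with the type arranged correctly this Frobenius is conjugate to $\pi^m$. The remaining step is a descent from $\mathbb{F}_{q^m}$ back to $\mathbb{F}_q$: one shows that because $\pi$ is already a Weil $q$-number, the simple $\mathbb{F}_{q^m}$-factor realizing $\pi^m$ is the base change of a unique simple $\mathbb{F}_q$-variety whose Frobenius is exactly $\pi$, using Weil restriction of scalars together with the injectivity just established (via Theorem \ref{Tate}) to identify the correct simple factor.

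The main obstacle is precisely this surjectivity, and within it the coupling of the Shimura--Taniyama computation with the descent from $\pi^m$ to $\pi$: one must guarantee that after extracting the correct simple isogeny factor over the large field and descending the base field, the resulting Frobenius is genuinely $\pi$ and not merely a conjugate of some power. This requires the full CM theory of abelian varieties over number fields and careful control of the local behaviour at $p$, which is where Honda's and Tate's arguments concentrate. By contrast, the well-definedness and injectivity stages are comparatively formal once Weil's estimate and Tate's local invariant formula are available.
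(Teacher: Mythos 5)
The paper itself offers no proof of this statement---it is quoted as background from \cite{J.Tate}---so the only meaningful comparison is with the standard Honda--Tate argument in the literature the paper cites, and your outline follows that argument faithfully: well-definedness via Weil's Riemann hypothesis together with Theorem \ref{Tate}; injectivity by pinning down the multiplicity $e$ through the local invariants of the endomorphism algebra (exactly the mechanism the paper records in Theorem \ref{dim} and the Brauer-group facts it cites from Pierce); and surjectivity via Honda's CM lifting, the Shimura--Taniyama formula, and descent from $\pi^m$ to $\pi$. Your assessment of where the weight lies is also correct: surjectivity is the deep part, and your treatment of it is a plan rather than a proof, since the CM construction, the Shimura--Taniyama computation, and the descent lemma are invoked rather than established---which is unavoidable for a theorem of this depth in this format. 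One simplification you should make in the descent step: once $B/\mathbb{F}_{q^m}$ realizes $\pi^m$, the Weil restriction $A=\mathrm{Res}_{\mathbb{F}_{q^m}/\mathbb{F}_q}B$ has characteristic polynomial $\prod_j(X^m-\beta_j)$, where the $\beta_j$ run over the roots of $P_B$; hence $\pi$ is already a Frobenius eigenvalue of $A$, and any simple isogeny factor of $A$ having $\pi$ as an eigenvalue has Frobenius conjugate to $\pi$. You therefore need neither the appeal to injectivity nor the claim that this factor base-changes back to the original simple factor over $\mathbb{F}_{q^m}$ (the latter is not quite correct as stated, though it is also not needed).
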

In otherwords, given an irreducible Weil polynomial $P(X)$, there exist an unique abelian variety upto isogeny and $e$ a natural number such that $P_A(X)=P(X)^e$.

A Weil-$q$ number $\pi$ is called a \emph{supersingular Weil-$q$-number} if $\pi=\sqrt{q}\zeta$, where $\zeta$ is some root of unity.

\begin{definition}An abelian variety $A$ over $k$ is  called \emph{supersingular} if any one of the following (equivalent)
conditions holds:(see~\cite[Theorem~4.2]{oort1974subvarieties}) 
\begin{enumerate}
\item the eigenvalues of the Frobenius $\pi_A$ are supersingular Weil-$q$-numbers;
\item the Newton polygon of $A$ is a straight line of slope $1/2$;
\item $A$ is $\overline{k}$-isogenous to a power of a supersingular elliptic curve.
\end{enumerate}
\end{definition}

We have the following useful theorem from \cite{J.Tate}.
\begin{theorem}\label{dim} Let A be a simple abelian variety over $k = \mathbb{F}_q$, then
\begin{enumerate}
\item $End_k (A)\otimes \mathbb{Q}$ is a division algebra with center $\mathbb{Q}(\pi_A)$ and
                 
\begin{displaymath}
2dim A = [End_k (A)\otimes \mathbb{Q}:\mathbb{Q}(\pi_A)]^\frac{1}{2} [\mathbb{Q}(\pi_A) : \mathbb{Q}] .
\end{displaymath}
\item The division algebra $End_k (A)\otimes \mathbb{Q}$ over $\mathbb{Q}(\pi_A)$ has the following splitting
behaviour\\
\begin{enumerate}
\item it splits at each divisor $\mathfrak{l}$ of $l$ in $\mathbb{Q}(\pi_A)$, if $l\neq p$,
\item the invariants at the divisors  $\mathfrak{p}$ of $p$ in  $\mathbb{Q}(\pi_A)$ can be evaluated with
\begin{displaymath}                                       
  inv_{\mathfrak{p}} (End_k (A)\otimes \mathbb{Q}) \equiv \frac{v_\mathfrak{p}(\pi_A)}{v_\mathfrak{p}(q)} [\mathbb{Q}(\pi_A)_\mathfrak{p} : \mathbb{Q}_p ] \mod \mathbb{Z}, 
 \end{displaymath}              
 where $\mathbb{Q}(\pi_A)_\mathfrak{p}$ denotes the completion of $\mathbb{Q}(\pi_A)$ at $\mathfrak{p}$ and $v_\mathfrak{p}$ denotes the $\mathfrak{p}$-adic valuation.                           
\item it does not split at the real places of $\mathbb{Q}(\pi_A)$.
\end{enumerate}
\end{enumerate}
\end{theorem}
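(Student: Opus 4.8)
The plan is to separate the statement into the structural part (1)---that $\mathrm{End}_k(A)\otimes\mathbb{Q}$ is a division algebra with center $\mathbb{Q}(\pi_A)$, together with the dimension formula---and the local part (2) describing the invariants, and to push as much as possible onto the $l$-adic Tate module, reserving genuine $p$-adic tools only for the primes above $p$. First I would establish that $E:=\mathrm{End}_k(A)\otimes\mathbb{Q}$ is a division algebra: since $A$ is $k$-simple, any nonzero $\phi\in\mathrm{End}_k(A)$ has image an abelian subvariety which must be all of $A$ up to isogeny, so $\phi$ is an isogeny and hence invertible in $E$. The Frobenius $\pi_A$ commutes with every $k$-rational endomorphism, so $F:=\mathbb{Q}(\pi_A)$ lies in the center $Z(E)$.

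The core input for (1) is Tate's isogeny theorem in its $l$-adic form (of which Theorem~\ref{Tate} is the rational shadow), which identifies $E\otimes_{\mathbb{Q}}\mathbb{Q}_l$ with the commutant of $\pi_A$ in $\mathrm{End}_{\mathbb{Q}_l}(V_l(A))$ for $l\neq p$. Writing $P_A(X)=h(X)^e$ with $h$ the minimal polynomial of $\pi_A$, which is irreducible because $A$ is simple, I get $F\cong\mathbb{Q}[X]/(h)$ with $[F:\mathbb{Q}]=\deg h$, and the commutant computation yields $E\otimes\mathbb{Q}_l\cong M_e(F\otimes\mathbb{Q}_l)$. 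This simultaneously shows $Z(E)=F$ and $[E:F]=e^2$, whence $2\dim A=\deg P_A=e\cdot\deg h=[E:F]^{1/2}[F:\mathbb{Q}]$, proving (1). The same local computation shows $E$ is a matrix algebra at every finite place $\mathfrak{l}\nmid p$, giving (2a); and (2c) follows since a real embedding forces $\sigma\pi_A=\pm\sqrt{q}$, the degenerate case with minimal polynomial $X^2-q$ or $X\mp\sqrt{q}$, at which the algebra is ramified.

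The main obstacle is (2b), the invariants at the primes $\mathfrak{p}\mid p$, where the $l$-adic Tate module is simply unavailable and the clean commutant description breaks down. Here I would replace $V_l(A)$ by the (covariant) Dieudonné module of the $p$-divisible group $A[p^\infty]$ and read off the local invariant of $E_{\mathfrak{p}}$ from the action of $\pi_A$ on it---equivalently, from the Newton slope at $\mathfrak{p}$---to obtain $\mathrm{inv}_{\mathfrak{p}}(E)\equiv \frac{v_{\mathfrak{p}}(\pi_A)}{v_{\mathfrak{p}}(q)}\,[F_{\mathfrak{p}}:\mathbb{Q}_p]\pmod{\mathbb{Z}}$. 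I expect this crystalline/Dieudonné computation to be the technical heart of the whole argument, as it is the one step that does not reduce to linear algebra over $\mathbb{Q}_l$.

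Finally, as a consistency check one verifies that the invariants just computed sum to zero in $\mathbb{Q}/\mathbb{Z}$, so that they indeed come from a genuine global division algebra over $F$; this follows from the reciprocity law of class field theory together with the functional equation $\pi_A\,\overline{\pi_A}=q$, which relates the contributions of $\mathfrak{p}$ and its complex conjugate place and pairs them against the ramification at the real and archimedean places. With (1), (2a) and (2c) handled by the $l$-adic commutant and (2b) by the Dieudonné module, the theorem follows.
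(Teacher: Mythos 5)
First, a fact you could not have known but which frames this review: the paper does \emph{not} prove this statement at all. Theorem \ref{dim} is quoted from Tate's work (the citation \cite{J.Tate}) and is used as a black box, so there is no internal proof to compare against; your sketch has to be judged against the standard argument in the cited literature, which is essentially what you reconstruct. Your treatment of (1) and (2a) is the right route: simplicity makes $\mathrm{End}_k(A)\otimes\mathbb{Q}$ a division algebra, Tate's $l$-adic isogeny theorem identifies $E\otimes\mathbb{Q}_l$ with the commutant of $\pi_A$ on $V_l(A)$, and together with the semisimplicity of the Frobenius action (also part of Tate's theorem, and worth stating, since it is what makes $V_l$ a free $F\otimes\mathbb{Q}_l$-module of rank $e$) this gives $E\otimes\mathbb{Q}_l\cong M_e(F\otimes\mathbb{Q}_l)$, hence the center, the dimension formula, and splitting away from $p$. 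For (2b) you correctly identify that one must pass to $A[p^\infty]$ and Dieudonné theory, using the $p$-adic analogue of the isogeny theorem; as a sketch this is acceptable and is indeed the technical heart.

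The genuine gap is (2c). Your justification --- a real embedding forces $\sigma\pi_A=\pm\sqrt{q}$, ``the degenerate case \dots at which the algebra is ramified'' --- asserts exactly what has to be proved, and the reciprocity argument you then offer as a ``consistency check'' cannot repair it in all cases. It does work when $q$ is a square and $\pi_A=\pm\sqrt{q}\in\mathbb{Z}$: then $F=\mathbb{Q}$, the invariant at $p$ is $1/2$ by (2b), all $\mathfrak{l}\nmid p$ split by (2a), so the unique real place is forced to carry invariant $1/2$. But when $q=p^n$ with $n$ odd and $\pi_A=\pm\sqrt{q}$ is irrational, $F=\mathbb{Q}(\sqrt{p})$ has \emph{two} real places and a single ramified prime $\mathfrak{p}$ above $p$, whose invariant by your own formula is $\frac{n}{2n}\cdot 2\equiv 0$ in $\mathbb{Q}/\mathbb{Z}$: every finite invariant vanishes, and reciprocity is equally consistent with both real invariants being $0$ (i.e.\ $E=F$) or both being $1/2$. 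Ruling out the split case requires an extra input you do not supply --- for instance Albert's classification with Rosati positivity, or the concrete argument that $E=F$ would make $A$ an elliptic curve with $\mathrm{End}^0=\mathbb{Q}(\sqrt{p})$, while base change to $\mathbb{F}_{q^2}$ turns the Frobenius into the integer $q$ and hence embeds $\mathbb{Q}(\sqrt{p})$ into the definite quaternion algebra ramified at $p$ and $\infty$, which is impossible because $\mathbb{Q}(\sqrt{p})\otimes\mathbb{R}\cong\mathbb{R}\times\mathbb{R}$ has zero divisors. This is not a pedantic corner: it is exactly the case producing the simple supersingular surfaces with $P_A(X)=(X^2-q)^2$ that appear in the paper's dimension-$2$ list, so a proof that silently loses it would be defective for the paper's purposes.
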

If $A$ is a supersingular abelian variety, then $\pi_A = \sqrt{q}\zeta$, where $\zeta$ is some root of unity. If $A$ is also simple, then it follows from above theorem that  $inv_{\mathfrak{p}} (End_k (A)\otimes \mathbb{Q}) \equiv \frac{1}{2}e_if_i \mod \mathbb{Z}$, where $e_i$ is the ramification index of $\mathfrak{p}$ over $p$ and $f_i$ is the residue class degree.\\
Also,  $ \pi \in \overline{\mathbb{Q}}$ be the supersingular Weil-$q$-number and  $P(X)$ be the corresponding minimal Weil polynomial. The invariants of $End_k (A)\otimes \mathbb{Q}$ lie in $\mathbb{Q}/\mathbb{Z}$. They can be evaluated from the
minimal polynomial $P(X)$ of $\pi_A$ as follows.
The only real Weil numbers are $q^{1/2}$ and -$q^{1/2}$ , so there are hardly
any real places of $\mathbb{Q}(\pi_A)$. We consider the polynomial $P(X)$ in $\mathbb{Q}_p [X]$, i.e., over the $p$-adic numbers. Let
                                    
\begin{displaymath}
P (X) = \displaystyle\prod_{i}f_i (X)
\end{displaymath}                                      
be the decomposition in irreducible factors in $\mathbb{Q}_p [X]$. The factors $f_i(X)$ correspond uniquely to the divisors $\mathfrak{p}_i$ of $p$ in $\mathbb{Q}(\pi_A )$. 
So to get the invariants, we have to factor $P(X)$ over $\mathbb{Q}_p$. Indeed,
\begin{eqnarray*}\nonumber
inv_{\mathfrak{p}_i} (End_k (A)\otimes \mathbb{Q}) \equiv \frac{v_p (f_i (0))}{ v_p (q)}  \mod \mathbb{Z}.
\end{eqnarray*}\nonumber
If $P(X)$ is a supersingular Weil polynomial, then $f_i (0)=\displaystyle\prod_{j=1}^{\deg f_i} \sqrt{q}\zeta_j$, where $\zeta_j$ is some root of unity.
In that case, $inv_{\mathfrak{p}_i} (End_k (A)\otimes \mathbb{Q})= \displaystyle\frac{n\frac{\deg f_i}{2}}{n} =\frac{\deg{f_i}}{2} \mod \mathbb{Z}.$

 Therefore the order of invariants is either $1$ or $2$ in $\mathbb{Q}/\mathbb{Z}$ depending on whether $\deg f_i$ is even or odd respectively. We use the invariants in order to evaluate the dimension of A as follows.
 
Since $End_k (A)\otimes \mathbb{Q}$ is a division algebra and  $\mathbb{Q}(\pi_A)$ is a number field, the  number $[End_k (A)\otimes \mathbb{Q}:\mathbb{Q}(\pi_A)]^\frac{1}{2}$ is equal to the order of
$End_k (A)\otimes \mathbb{Q}$ in the Brauer group of $\mathbb{Q}(\pi_A)$ see  theorem 18.6, \cite{Pierce},
which turns to be equal to the least common multiple of the orders of all the local invariants in $\mathbb{Q}/\mathbb{Z}$; see theorem 18.5, \cite{Pierce}. This fact, along with the Theorem \ref{dim}, gives the dimension of $A$.\\

The rest of section is dedicated to background useful for computing  supersingular Weil polynomials.\\ 

Let $U(\mathbb{Z} / m\mathbb{Z})$ denote the multiplicative group of integers modulo $m$ and $\zeta_m$ be a primitive $m$-th root of unity.
Then there exist a natural isomorphism of groups 
$$ U(\mathbb{Z} / m\mathbb{Z}) \rightarrow Gal(\mathbb{Q}(\zeta_{m})/ \mathbb{Q}), ~~~ a \rightarrow  \sigma_a$$
where $\sigma_a(\zeta_{m})=\zeta_{m}^a$.
There is a unique nontrivial homomorphism  $U(\mathbb{Z} / p\mathbb{Z})\rightarrow \{\pm 1 \}$ ($p$ an odd prime), the Legendre symbol $a \rightarrow (\frac{a}{p})$.
The group $U(\mathbb{Z} / 8\mathbb{Z})=\{\pm 1,  \pm 3 \}$ admits three nontrivial quadratic characters (homomorphisms  to $\{\pm 1 \}$):
\begin{enumerate}
\item $\varepsilon$, defined by $\varepsilon(a)\equiv a \mod 4$.
\item $\chi_{2}$, given by $\chi_{2}(\pm 1)=1, ~\chi_{2}(\pm 3)=-1$.
\item  $\chi_{-2}$, given by $\chi_{-2}(1)=\chi_{-2}(3)=1,\chi_{2}(-1)= ~\chi_{2}(-3)=-1$.
\end{enumerate}
For any odd prime $p$, $\sqrt{\pm p } \in \mathbb{Q}(\zeta_{4p})$ and 
$$ \displaystyle  \sigma_a(\sqrt{\pm p })=
                                               \left\{ \begin{array}{ll}
         (\frac{a}{p})\sqrt{\pm p } & \mbox{if $ \pm p \equiv 1 \mod 4$ };\\
         \varepsilon(a)(\frac{a}{p})\sqrt{\pm p }& \mbox{if $\pm p \equiv 3 \mod 4$ }.\end{array} \right. $$\\
         
         $\sqrt{\pm 2 } \in \mathbb{Q}(\zeta_{8})$ and for $a \in U(\mathbb{Z} / 8\mathbb{Z})$,\\
         $\sigma_a(\sqrt{2})= \chi_2(a)\sqrt{2}, ~ \sigma_a(\sqrt{-2})= \chi_{-2}(a)\sqrt{-2}$

For an odd prime $p$ we let 
\begin{displaymath}
p^{*}=(\frac{-1}{p})p= \left\{ \begin{array}{ll}
        p & \mbox{if $  p \equiv 1 \mod 4$ };\\
         -p & \mbox{if $p \equiv 3 \mod 4$ }.\end{array} \right. 
\end{displaymath}
For $p$ an odd prime and $t$ any odd number let \\
\begin{displaymath}
\displaystyle\Psi_{p,t}(X):=\prod_{a \in U(\mathbb{Z} / pt\mathbb{Z})}(X-(\frac{a}{p}) \zeta_{pt}^{a}) 
\end{displaymath}
\begin{displaymath}
\displaystyle\Psi_{2,t}(X):=\prod_{a \in U(\mathbb{Z} / t\mathbb{Z})}(X-\zeta_8\zeta_{t}^{a}) (X-\zeta_{8}^{-1}\zeta_{t}^{a})
\end{displaymath}
\begin{displaymath}
\displaystyle\Psi_{-2,t}(X):=\prod_{a \in U(\mathbb{Z} / t\mathbb{Z})}(X-\zeta_8\zeta_{t}^{a}) (X-\zeta_{8}^{3}\zeta_{t}^{a}).
\end{displaymath}

The polynomials below \\
$$\Psi_{p,t}(X) \in \mathbb{Q}(\sqrt{p^*})[X]$$
$$\Psi_{2,t}(X) \in \mathbb{Q}(\sqrt{2})[X]$$
$$\Psi_{-2,t}(X)  \in \mathbb{Q}(\sqrt{-2})[X]$$ factor as a product of 2 irreducibles over $\mathbb{Q}(\sqrt{p^*})[X], ~ \mathbb{Q}(\sqrt{2})[X], ~ \mathbb{Q}(\sqrt{-2})[X]$ respectively.
For example $\Psi_{p,t}(X)=\Psi_{p,t}^1(X)\Psi_{p,t}^{-1}(X)$ where $\Psi_{p,t}^{1}(X)=\prod_{\frac{a}{p}=1}(X-(\frac{a}{p}=1) \zeta_{pt}^{a})$ and $\Psi_{p,t}^{-1}(X)=\prod_{\frac{a}{p}=-1}(X+(\frac{a}{p}) \zeta_{pt}^{a}).$\\
If $K$ is a field, $f(X) \in K[X]$ and $a \in K^{\times}$, then  let $f^{[a]}(X):= a^{\deg f}f(\frac{X}{a})$.
Then $\Psi_{p,t}^{[\sqrt{p^*}]}(X)$,  $\Psi_{2,t}^{[\sqrt{2}]}(X)$,  $\Psi_{-2,t}^{[\sqrt{-2}]}(X) \in \mathbb{Q}[X]$ and are all irreducible over $\mathbb{Q}$.  
\section{Odd case}
\begin{theorem} Let $A$ be a supersingular simple abelian variety over $\mathbb{F}_q$ with  $q=p^r$ with $r$ odd. If the characteristic polynomial of $A$, $P_A(X)$, has no real root, then  $P_A(X)$ is irreducible.
\end{theorem}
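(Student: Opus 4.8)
The plan is to use the division-algebra dictionary recalled before the statement. Since $A$ is simple, $P_A(X) = h(X)^e$ where $h$ is the minimal polynomial of $\pi_A$, and the task reduces to proving $e = 1$. Comparing $\deg P_A = 2\dim A$ with Theorem \ref{dim}(1) identifies $e = [\mathrm{End}_k(A)\otimes\mathbb{Q} : \mathbb{Q}(\pi_A)]^{1/2}$, which by the Brauer-group facts quoted in the excerpt equals the least common multiple of the orders of all the local invariants of $\mathrm{End}_k(A)\otimes\mathbb{Q}$. So I would go place by place and show that \emph{every} local invariant has order $1$.

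First I would dispose of the places not over $p$. By Theorem \ref{dim}(2a) the algebra splits at every $\mathfrak{l}\mid l$ with $l\neq p$, so these contribute order $1$. The real places are controlled by the hypothesis: the roots of $P_A$ are the conjugates $\sigma(\pi_A)$, so $P_A$ has a real root exactly when $\mathbb{Q}(\pi_A)$ admits a real embedding. Since $P_A$ has no real root, $\mathbb{Q}(\pi_A)$ is totally complex and there are no real places to contribute (which would otherwise force order $2$ by Theorem \ref{dim}(2c)). Thus $e$ is the least common multiple of the orders of the invariants $\mathrm{inv}_{\mathfrak{p}_i}$ at the primes $\mathfrak{p}_i\mid p$ alone.

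The crux is then a parity computation at $p$. Writing $\pi_A = \sqrt{q}\,\zeta$ with $\zeta$ a root of unity and squaring gives $\pi_A^2 = q\,\zeta^2$; since $\zeta^2$ is a unit at every $\mathfrak{p}\mid p$, I obtain $2\,v_{\mathfrak{p}}(\pi_A) = v_{\mathfrak{p}}(q) = r\,e(\mathfrak{p}/p)$, where $e(\mathfrak{p}/p)$ is the ramification index. As $\pi_A$ is an algebraic integer, $v_{\mathfrak{p}}(\pi_A)$ is a nonnegative integer, so $r\,e(\mathfrak{p}/p)$ is even; because $r$ is odd this forces $e(\mathfrak{p}/p)$ to be even. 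Hence each local degree $\deg f_i = [\mathbb{Q}(\pi_A)_{\mathfrak{p}_i}:\mathbb{Q}_p] = e(\mathfrak{p}_i/p)\,f(\mathfrak{p}_i/p)$ is even, and by the invariant formula $\mathrm{inv}_{\mathfrak{p}_i}\equiv \tfrac{\deg f_i}{2}\equiv 0 \bmod \mathbb{Z}$, of order $1$.

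Combining the three steps, every local invariant has order $1$, so their least common multiple is $1$, giving $e = 1$ and $P_A(X) = h(X)$ irreducible. The only place where real work happens is the parity step: the theorem hinges on the slope-$\tfrac{1}{2}$ condition forcing $v_{\mathfrak{p}}(\pi_A) = \tfrac{r}{2}\,e(\mathfrak{p}/p)$, so that the parity of $r$ controls the ramification and hence the local degrees. I expect no further obstacle, though I would double-check that the normalization of $v_{\mathfrak{p}}$ (value group $\mathbb{Z}$) indeed makes $v_{\mathfrak{p}}(\pi_A)$ integral, which is what powers the parity argument.
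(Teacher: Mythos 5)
Your proposal is correct and follows essentially the same route as the paper: both reduce to showing $e=1$ via the Brauer-group identification of $[\mathrm{End}_k(A)\otimes\mathbb{Q}:\mathbb{Q}(\pi_A)]^{1/2}$ with the lcm of the orders of the local invariants, and both kill the invariants at $\mathfrak{p}\mid p$ by the same parity argument (the paper writes it as $e_i=v_{\mathfrak{p}}(p)=2v_{\mathfrak{p}}(\sqrt{p})$, which is your observation that $2v_{\mathfrak{p}}(\pi_A)=r\,e_i$ with $r$ odd forces $e_i$ even). Your write-up is in fact more complete than the paper's, since it explicitly disposes of the places over $l\neq p$ and uses the no-real-root hypothesis to rule out real places, steps the paper leaves implicit.
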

\begin{proof}If $A$ is a supersingular abelian variety then it follows from the Theorem \ref{dim} that,  $inv_{\mathfrak{p}} (End_k (A)\otimes \mathbb{Q}) \equiv \frac{1}{2}e_if_i \mod \mathbb{Z}$, where $e_i$ is the ramification index of $\mathfrak{p}$ over $p$ and $f_i$ is the residue class degree.\\
But $e_i=e=v_{\mathfrak{p}}(p)=2v_{\mathfrak{p}}(\sqrt{p})$. This implies $inv_{\mathfrak{p}} (End_k (A)\otimes \mathbb{Q})= 0 \mod \mathbb{Z}$. In otherwords, order of invariants is zero and $2\dim A$ is equal to  the degree of the Weil polynomial. This along with the definition of the characteristic polynomial implies $P_A(X)$ is irreducible. 
\end{proof}
\begin{remark}  If $P(X)$ is a supersingular Weil polynomial with all real roots, over $\mathbb{F}_q$ with $q=p^n$, $n$ is odd, then $P(X)=X^2-q$, then we have least common multiple of local invariants as $2$, hence $P(X)^2$ is a characteristic polynomial of  simple supersingular abelian variety of dimension $2$, see \cite{KE}. 
\end{remark}
Let $p$ be a prime and $q=(\pm p)^r$ with $r$ odd. We wish to calculate the minimal polynomial over $\mathbb{Q}$ of $\sqrt{q}\zeta_m$.
Without lost of generality we assume that $m=4t$ for some $t$ since $\sqrt{q}\zeta_m= \sqrt{-q}\zeta_4\zeta_m= \sqrt{-q}\zeta_{m^{'}}$.
\begin{theorem}\label{odd}
Let $\theta= \sqrt{q}\zeta_{4t}$. We distinguish two cases.
\begin{enumerate}
\item Normal case If either
\begin{enumerate}
\item $q$ is odd and 
\begin{enumerate}
\item $t$ is even or 
\item $p \nmid t$ or 
\item $q \equiv 1 \mod 4$
\end{enumerate} or 
\item $q$ is even and $t \not\equiv 2 \mod 4$.
\end{enumerate} then the minimal polynomial of $\theta$ over $\mathbb{Q}$ is $\Phi_{4t}^{[\sqrt{q}]}(X)$.
\item exceptional case
\begin{enumerate}
\item If $q \equiv 3 \mod 4$ and $p|t$ and $t$ is odd, then the minimal polynomial of $\theta$ is  $\Psi_{p,\frac{t}{p}}^{[\sqrt{q^{*}}]}(X)$.
\item If $q >0$ is even and $t=2s \equiv 2 \mod 4$, then the minimal polynomial of $\theta$ is $\Psi_{2,s}^{[\sqrt{q}]}(X)$.
\item If $q <0$ is even and $t=2s \equiv 2 \mod 4$, then the minimal polynomial of $\theta$ is $\Psi_{-2,s}^{[\sqrt{q}]}(X)$.

\end{enumerate}
\end{enumerate}
\end{theorem}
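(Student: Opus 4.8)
The plan is to pin down the degree $[\mathbb{Q}(\theta):\mathbb{Q}]$ first and then recognize the stated polynomials as the minimal polynomial by a degree count. Since $\theta^2 = q\zeta_{2t}$ and $q\in\mathbb{Q}^\times$, we get $\zeta_{2t}=\theta^2/q\in\mathbb{Q}(\theta)$, so $\mathbb{Q}(\zeta_{2t})\subseteq\mathbb{Q}(\theta)$, while $\theta$ is a root of $X^2-q\zeta_{2t}\in\mathbb{Q}(\zeta_{2t})[X]$, whence $[\mathbb{Q}(\theta):\mathbb{Q}(\zeta_{2t})]\in\{1,2\}$. A direct computation with $\phi$ gives $\phi(4t)=2\phi(2t)$ for every $t\ge 1$, so the only two possibilities are $[\mathbb{Q}(\theta):\mathbb{Q}]=\phi(4t)$ (the normal case, $\theta\notin\mathbb{Q}(\zeta_{2t})$) and $[\mathbb{Q}(\theta):\mathbb{Q}]=\phi(2t)=\tfrac12\phi(4t)$ (the exceptional case, $\theta\in\mathbb{Q}(\zeta_{2t})$). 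Thus the whole theorem reduces to deciding exactly when $\theta\in\mathbb{Q}(\zeta_{2t})$, equivalently when $q\zeta_{2t}$ is a square in $\mathbb{Q}(\zeta_{2t})$.

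For the decision I would use the Galois element $\tau=\sigma_{2t+1}$, which generates $\mathrm{Gal}(\mathbb{Q}(\zeta_{4t})/\mathbb{Q}(\zeta_{2t}))$. Since $\zeta_{4t}^{2t}=-1$ we have $\tau(\zeta_{4t})=-\zeta_{4t}$, so whenever $\sqrt q\in\mathbb{Q}(\zeta_{4t})$ one computes $\tau(\theta)=-\tau(\sqrt q)\,\zeta_{4t}$, giving the clean criterion
\[
\theta\in\mathbb{Q}(\zeta_{2t})\iff \tau(\theta)=\theta\iff \tau(\sqrt q)=-\sqrt q .
\]
If instead $\sqrt q\notin\mathbb{Q}(\zeta_{4t})$, then $\theta\notin\mathbb{Q}(\zeta_{4t})\supseteq\mathbb{Q}(\zeta_{2t})$ and we are automatically in the normal case. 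The next step is to translate both conditions into arithmetic of $p$ and $t$ using the material of Section~2: the conductor facts $\sqrt{\pm p}\in\mathbb{Q}(\zeta_{4t})\iff p\mid t$ (for odd $p$) and $\sqrt{\pm 2}\in\mathbb{Q}(\zeta_{4t})\iff t$ even, together with the explicit formulas for $\sigma_a(\sqrt{\pm p})$ and $\sigma_a(\sqrt{\pm 2})$ evaluated at $a=2t+1$. Feeding in $\varepsilon(2t+1)$, the Legendre symbol $\left(\tfrac{2t+1}{p}\right)$ (which equals $1$ once $p\mid t$), and $\chi_{\pm 2}(2t+1\bmod 8)$, one checks that $\tau(\sqrt q)=-\sqrt q$ occurs in precisely the three exceptional configurations listed, and $\tau(\sqrt q)=\sqrt q$ (or $\sqrt q\notin\mathbb{Q}(\zeta_{4t})$) in precisely the normal ones.

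It remains to identify the minimal polynomial in each regime. In the normal case, because $4\mid 4t$ the set of primitive $4t$-th roots of unity is stable under negation, so $\Phi_{4t}$ is an even polynomial; hence $\Phi_{4t}^{[\sqrt q]}(X)=(\sqrt q)^{\phi(4t)}\Phi_{4t}(X/\sqrt q)$ involves only even powers of $\sqrt q$ and lies in $\mathbb{Q}[X]$. It is monic of degree $\phi(4t)=[\mathbb{Q}(\theta):\mathbb{Q}]$ and has $\theta$ as a root, so it must be the minimal polynomial. In the exceptional case the relevant $\Psi^{[\cdot]}$ is monic, lies in $\mathbb{Q}[X]$, and is irreducible of degree $\phi(2t)=[\mathbb{Q}(\theta):\mathbb{Q}]$ by the results quoted in Section~2, so it suffices to verify that $\theta$ is one of its roots. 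I expect this last verification to be the main obstacle: one must rewrite $\theta=\sqrt q\,\zeta_{4t}$ using $\zeta_4=\zeta_{4t}^{t}$ and $\sqrt q=\mp i\sqrt{q^\ast}$ (resp. a power of $\zeta_8$ for $q$ even) and match it, sign and Legendre factor $\left(\tfrac{a}{p}\right)$ included, against the explicit product defining $\Psi_{p,t/p}$ (resp. $\Psi_{\pm 2,s}$); this is the bookkeeping that the construction of the $\Psi$-polynomials was designed to absorb, but it requires careful tracking of roots of unity and signs.
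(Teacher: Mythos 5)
Your reduction is sound, and in substance it is the paper's own argument in different packaging: the element $\tau=\sigma_{2t+1}$ you isolate is exactly the nontrivial stabilizer element the paper finds (it appears there as $\sigma\sigma_{1+2t}$ in its Case A, $\sigma_{1+2t}$ in Case B, and $\sigma_{1+4s}$ in Case C), and your translation of the criterion $\tau(\sqrt q)=-\sqrt q$ through $\varepsilon(2t+1)$, $(\frac{2t+1}{p})=1$ when $p\mid t$, and $\chi_{\pm 2}(2t+1\bmod 8)$ is the same computation the paper performs; it does reproduce the stated case division, since $\varepsilon(2t+1)=-1$ iff $t$ is odd and $2t+1\equiv -3 \pmod 8$ iff $t\equiv 2\pmod 4$. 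Your normal case is complete: the evenness of $\Phi_{4t}$ (primitive $4t$-th roots of unity are stable under negation), hence rationality of $\Phi_{4t}^{[\sqrt q]}$, together with the degree count, legitimately pins down the minimal polynomial.

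The genuine gap is the step you yourself flag as "the main obstacle" and never carry out: in the exceptional cases you must show that $\theta$ actually is a root of the \emph{named} polynomial $\Psi_{p,t/p}^{[\sqrt{q^*}]}$ (resp. $\Psi_{\pm 2,s}^{[\sqrt q]}$). Knowing only that $[\mathbb{Q}(\theta):\mathbb{Q}]=\frac12\phi(4t)$ and that the named polynomial is irreducible of that degree proves nothing about $\theta$ — any other irreducible of that degree has the same credentials — so the theorem's actual conclusion is not reached. This is precisely what the paper's proof supplies, and it is short: for case (a), since $t$ is odd, CRT gives $\zeta_{4t}=\zeta_4^{u}\zeta_t^{v}$ with $\zeta_4^u=\pm i$ and $v\in U(\mathbb{Z}/t\mathbb{Z})$, so $\theta=\pm i\sqrt q\,\zeta_t^{v}=\pm\sqrt{-q}\,\zeta_t^{v}=\pm\sqrt{q^{*}}\,\zeta_t^{v}$, which is visibly among the roots $(\frac{a}{p})\sqrt{q^{*}}\zeta_t^{a}$ of $\Psi_{p,t/p}^{[\sqrt{q^{*}}]}$; the residual sign ambiguity is exactly the choice-of-root-of-unity ambiguity of Remark~\ref{R1}(4), which only swaps $\Psi(X)$ with $\Psi(-X)$ and is absorbed by the $\pm$ signs in the paper's lists. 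The analogous two lines with $\zeta_{8s}=\zeta_8^{u}\zeta_s^{v}$ settle cases (b),(c). A second, smaller omission: you cite Section~2 for irreducibility and rationality of "$\Psi^{[\cdot]}$", but Section~2 asserts this for $\Psi_{p,t}^{[\sqrt{p^{*}}]}$, not for $\Psi_{p,t/p}^{[\sqrt{q^{*}}]}$; you need the bridging observation that $\sqrt{q^{*}}/\sqrt{p^{*}}\in\mathbb{Q}$ (in case (a) one has $p^{*}q^{*}=p^{r+1}$, a positive even power of $p$), so the two polynomials differ by a rational rescaling of the variable, which preserves rationality of coefficients and irreducibility.
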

\begin{proof}
Case (A)
First suppose $p \nmid t$. Then $\sqrt{q} \notin \mathbb{Q}(\zeta_{4t})$ and \\
$\Gamma:=Gal(\mathbb{Q}(\sqrt{q}, \zeta_{4t})/ \mathbb{Q})= <\sigma> \times Gal(\mathbb{Q}(\zeta_{4t})/ \mathbb{Q})$ where 
$\sigma(\sqrt{q})=-\sqrt{q}$, $\sigma(\zeta_{4t})=\zeta_{4t}$.\\

Let $\tau \in \Gamma$ and suppose $\tau(\theta)= \theta$. \\
´Case 1: $\tau =\sigma_a,~ a \in U(\mathbb{Z} /4 t\mathbb{Z})$ implies $\sqrt{q}\zeta_{4t}=\sqrt{q}\zeta_{4t}^{a}$ which implies $a =1$ which implies  $\tau = 1$.\\
Case 2: $\tau = \sigma \sigma_a$  implies $\sqrt{q}\zeta_{4t}=-\sqrt{q}\zeta_{4t}^{a}$ implies $\zeta_{4t}^{a}=-\zeta_{4t}=\zeta_{4t}^{1+2t}$  which implies $a =2t+1$.
So the subgroup of $\Gamma$ fixing $\theta$ is  $\{1, \sigma\sigma_{1+2t}\}$ and $Gal(\mathbb{Q}(\theta)/ \mathbb{Q})\cong Gal(\mathbb{Q}(\zeta_{4t})/ \mathbb{Q})$. 
In particular, the conjugates of $\theta$ over $\mathbb{Q}$ are $\sigma_{a}(\theta)=\sqrt{q}\zeta_{4t}^{a}$, and the minimal polynomial of $\theta$ over $\mathbb{Q}$ is $\displaystyle \prod_{a \in U(\mathbb{Z} / 4t\mathbb{Z})}(X-\sqrt{q}\zeta_{4t}^{a})=\Phi_{4t}^{[\sqrt{q}]}(X)$.\\
Case B.\\
 Now suppose $p$ is odd and $p|t$. 
Then $\sqrt{q} \in \mathbb{Q}(\zeta_{4t})$ and if $a \in U(\mathbb{Z} / 4t\mathbb{Z})$  we have \\
$$ \displaystyle \sigma_a(\theta) =  \sigma_a(\sqrt{q}\zeta_{4t})=
                                               \left\{ \begin{array}{ll}
         (\frac{a}{p})\sqrt{q}\zeta_{4t}^a & \mbox{if $q \equiv 1 \mod 4$ };\\
         \varepsilon(a)(\frac{a}{p})\sqrt{q}\zeta_{4t}^a& \mbox{if $q \equiv 3 \mod 4$ }.\end{array} \right. $$

Suppose $q \equiv  \mod 4 $ and $\sigma_a(\theta)=\theta$.
Then $(\frac{a}{p})\zeta_{4t}^a=\zeta_{4t}$.
If $(\frac{a}{p})=1$  then $a=1$ in $U(\mathbb{Z} / 4t\mathbb{Z})$ implies $\sigma_a=1$.
However  $(\frac{a}{p})=-1$ implies $\zeta_{4t}^a=\zeta_{4t}^{1+2t}$ which implies $a=1+2t$ or $a \equiv 1 \mod p$. This implies $(\frac{a}{p})=1$ which is a contradiction.\\
So $\sigma_a=1$ and  $Gal(\mathbb{Q}(\theta)/ \mathbb{Q})=(\mathbb{Q}(\zeta_{4t})/ \mathbb{Q})$  whence the minimal polynomial of $\theta$ is $\Phi_{4t}^{[\sqrt{q}]}(X)$.\\
Now suppose $q \equiv 3 \mod 4$ and $\sigma_a(\theta)=\theta$.
Then $\zeta_{4t}=\varepsilon(a)(\frac{a}{p})\sqrt{q}\zeta_{4t}^a$. Since $\varepsilon(a)(\frac{a}{p}) \in \{1, -1\}$ we have $a \in \{1, 1+2t\}$.
If $a=1+2t$ then $(\frac{a}{p})=1$ and $\varepsilon(a)= 1+2t \mod 4 =-1$ if and only if $t$ is odd.
Thus if $t$ is even, the minimal polynomial of $\theta$ is $\Phi_{4t}^{[\sqrt{q}]}(X)$.\\
However, if $t$ is odd, then $\theta$ is fixed by $\sigma_{1+2t}$ and hence the degree of its minimal polynomial over $\mathbb{Q}$ is $\frac{1}{2}\phi(4t)=\phi(t).$ However, in this case $\sqrt{q}\zeta_{4t}=\pm  \sqrt{-q}\zeta_{t}= \pm\sqrt{q^{*}}\zeta_{t} \in \mathbb{Q}(\zeta_t)$ and thus its minimal polynomial is $\prod_{a \in U(\mathbb{Z} / t\mathbb{Z})}(X-(\frac{a}{p}) \zeta_{t}^{a}) =\Psi_{p,\frac{t}{p}}^{[\sqrt{q^{*}}]}(X)$.
Case C:
$q$ is even and $2|t$. So $4t=8s$. Let $\zeta= \zeta_{4t}=\zeta_{8s}$. Then $\theta \in \mathbb{Q}{\zeta}$ and 
$$ \displaystyle \sigma_a(\theta) =  \sigma_a(\sqrt{q}\zeta)=
                                               \left\{ \begin{array}{ll}
         (\chi_{2}(a)\sqrt{q}\zeta^a & \mbox{if $q>0$ };\\
        \chi_{-2}(a)\sqrt{q}\zeta^a & \mbox{if $q<0$ }.\end{array} \right. $$

Suppose $\sigma_a(\theta)=\theta$. Then $\chi_{\pm 2}(a)=-1$ Then $a=1+4s \in  U(\mathbb{Z} / 8s \mathbb{Z})$. If $s$ is even then $a \equiv 1 \mod 8$ which implies $\chi_{\pm 2}(a)=1$ which a contradiction.
Thus if $s$ is even, the minimal polynomial of $\theta$ is $\Phi_{4t}^{[\sqrt{q}]}(X)$.\\
We can suppose that $4t=8s$ with $s$ odd. 
Then $1+4s=-3 \in U(\mathbb{Z} / 8s \mathbb{Z})$. It follows that $\theta$ is fixed by the $\{1, \sigma_{1+4s}\}$ and so its minimal polynomial has degree $\frac{1}{2}\phi(4t)$. We can write $\zeta$  as a product $\zeta_8 \zeta_s$. So $\theta= \sqrt{q}\zeta_8 \zeta_s$. \\
If $q>0$, the conjugates of $\theta$ are $\sqrt{q}\zeta_8 \zeta_s^a$ and $\sqrt{q}\zeta_8^{-1} \zeta_s^a$ as $a$ ranges over $U(\mathbb{Z} / s\mathbb{Z})$.
Thus the minimal polynomial is \\
$\displaystyle \prod_{a \in U(\mathbb{Z} / s\mathbb{Z})}(X-\sqrt{q}\zeta_8 \zeta_{s}^{a}) (X-\sqrt{q} \zeta_{8}^{-1}\zeta_{s}^{a})=\Psi_{2,s}^{[\sqrt{q}]}(X)$.\\
Similarly, if $q<0$, the minimal polynomial of $\theta$ is $\Psi_{-2,s}^{[\sqrt{q}]}(X)$.\\ This proves the theorem.
\end{proof}
\begin{remark}\label{R1}
\begin{enumerate}
\item In the normal case, the degree is $\phi(4t)$ and  in the exception case the degree is $\frac{1}{2}\phi(4t)$.

\item Since $\Psi_{p,t}^{[\sqrt{p^*}]}(X) \in \mathbb{Q}[X]$, it follows $\Psi_{p,t}(X) \in \mathbb{Q}(\sqrt{p^*})[X]$ and the coefficients of even powers are rational (even integers) and the coefficients of odd powers are integer multiples of $\sqrt{p^*}$. Similiar is true for $\Psi_{\pm 2,t}(X).$
\item If $p$ is odd, then $\Psi_{p,t}$ is a reciprocal polynomial when $p \equiv 1 \mod 4$ and anti-reciprocal (i.e; $ X^{\deg \Psi_{p,t} }\Psi_{p,t}(-\frac{1}{X}))=\Psi_{p,t}(X))$ when $p \equiv 3 \mod 4$.
\item $\Psi_{p,t}(X)$ acutally depends on the choice of $\zeta_{pt}$. If we replace it with  $\zeta_{pt}^a$ where $(\frac{a}{p})= -1$ then we get $\Psi_{p,t}(-X)$. Similiar is true for $\Psi_{\pm 2,t}(X).$
We have $$\Psi_{p,t}(X)\Psi_{p,t}(-X)=\Phi_{pt}(X^2).$$

\item It easily follows that polynomial $\Psi_{p,t}$ can be calculated recursively in the same way as cyclotomic polynomials.
$$\Psi_{p,tp^k}(X)=\Psi_{p,t}(X^{p^{k-1}}).$$ 
While if $l$ is a prime not dividing $t$ then 
$$\Psi_{p,l^k}(X)=\frac{\Psi_{p,t}(X^{l^k})}{\Psi_{p,t}(X^{l^{k-1}})}.$$

\end{enumerate}

\end{remark}
\section{Dimension 1}
\subsection{Normal Case} In this case $\phi(4t)= 2g = 2$ which implies $t=1$. Therefore  $X^2 \pm q$ is a Weil polynomial. 
\subsection{Exceptional Case} In this case $\frac{1}{2}\phi(4t)=2g=2$. That implies $4t \in \{8 ,12 \}$ or $t \in \{ 2, 3 \}$.
  In case  $t=2$  then $q=(\pm 2)^r$ with $r$ odd. Then,
  \begin{enumerate} 
  \item If $q>0$ then the minimal polynomial of $\theta$ is  $\Psi_{2,1}^{[\sqrt{q}]}(X)$.\\
But, $\Psi_{2,1}(X)=(X-\zeta_8) (X-\zeta_{8}^{-1})= X^2 \pm (\zeta_8+\zeta_8^{-1})X+1 = X^2\pm \sqrt{2}X+1$. Therefore 
   $\Psi_{2,1}^{[\sqrt{q}]}(X)= X^2\pm \sqrt{2q}X+q$.
  \item   If $q<0$, then the minimal polynomial of $\theta$ is  $\Psi_{-2,1}^{[\sqrt{q}]}(X)$.\\
But, $\Psi_{2,1}(X)=(X-\zeta_8) (X-\zeta_{8}^{3})= X^2 \pm (\zeta_8+\zeta_8^{3})X-1 = X^2\pm \sqrt{-2}X-1$. Therefore 
   $\Psi_{-2,1}^{[\sqrt{q}]}(X)= X^2\pm \sqrt{-2q}X-q$.
   \end{enumerate}
     In case $t=3$, then $p=3$ and $q=3^r$ with $r$ odd. In that case, the minimal polynomial of $\theta$ is  $\Psi_{3,1}^{[\sqrt{q}]}(X).$\\
But, $\displaystyle\Psi_{3,1}(X)  =\prod_{a \in U(\mathbb{Z} / 3\mathbb{Z})}(X-(\frac{a}{3}) \zeta_{3}^{a})= X^2- \sum_{a=1}^{2}(\frac{a}{p})X+1= X^2\pm \sqrt{-3}X-1$.\\
 Therefore $\Psi_{3,1}^{[\sqrt{q^*}]}(X)= X^2\pm \sqrt{3q}X+q$.\\

\section{Dimension 2}
\subsection{Normal Case} In this case $\phi(4t)= 2g = 4$. That implies $4t \in  \{8,12 \}$ or $t \in \{2, 3\}$.
Therefore either  $t=2$, $p$ odd and minimal polynomial is \\
$$\Phi_{8}^{[\sqrt{q}]}(X)=X^4+q^2,$$
or $t=3$, $q \neq 3^r$, $r ~odd$ and minimal polynomial is \\
$$\Phi_{12}^{[\sqrt{q}]}(X)=X^4-qX^2+q^2.$$
\subsection{Exceptional Case} In this case $\frac{1}{2}\phi(4t)=2g=4$. That implies $4t \in  \{16, 20, 24\}$ or $t \in \{4,5, 6\}$.
\begin{enumerate}
\item Either $p$ is odd, $t$ is odd, $p$ divides $t$, $q \equiv 3 \mod 4$.

In that case,  $t=p=5$, $q=(-5)^r$ with $r$ odd and $\Psi_{5,1}^{[\sqrt{q^*}]}(X)$ is the minimal polynomial of $\theta$.
We have $\Psi_{5,1}(X)\Psi_{5,1}(-X)=\Phi_5(X^2)$. Therefore,  $\Psi_{5,1}(X)= X^4 \pm  \sqrt{5}X^3 + 3X^2 \pm  \sqrt{5}X + 1$ and $$\Psi_{5,1}^{[\sqrt{q^*}]}(X)=X^4 \pm  \sqrt{-5q}X^3 - 3qX^2 \mp q\sqrt{-5q}X + q^2.$$

 \item $t \equiv 2 \mod 4, ~q=(\pm 2)^r$ with $r$ odd. 
 In that case,  $t=6=2\cdot 3 \mod 4$. Then 
 \begin{enumerate} 
  \item If $q>0$ then the minimal polynomial of $\theta$ is  $\Psi_{2,3}^{[\sqrt{q}]}(X)$.\\
But, $$\Psi_{2,3}(X)=\frac{\Psi_{2,1}(X^3)}{\Psi_{2,1}(X)}=X^4\pm \sqrt{2}X^3 + X^2 \pm \sqrt{2}X + 1$$ and hence
$$\Psi_{2,3}^{[\sqrt{q}]}(X)=X^4\pm \sqrt{2q}X^3 + qX^2 \pm q\sqrt{2q}X + q^2.$$
  \item   If $q<0$, then the minimal polynomial of $\theta$ is  $\Psi_{-2,3}^{[\sqrt{q}]}(X)$.\\
  But, $\Psi_{-2,3}(X)= X^4 \pm \sqrt{-2}X^3 - X^2 \mp \sqrt{-2}X + 1$ and hence 
  $\Psi_{-2,3}^{[\sqrt{q}]}(X)=X^4 \pm \sqrt{-2q}X^3 - qX^2 \mp q\sqrt{-2q}X + q^2$.
   \end{enumerate}
\end{enumerate}

\section{Dimension 3}
\subsection{Normal Case} In this case $\phi(4t)= 2g =6 $ and there is no such $t$.
\subsection{Exceptional Case} In this case $\frac{1}{2}\phi(4t)=2g=6$. That implies $4t \in  \{28, 36 \}$ or $t \in \{ 7, 9 \}$.
In that case, $p$ is odd, $t$ is odd, $p$ divides $t$, $q \equiv 3 \mod 4$. 
\begin{enumerate}
\item $t=7,~ p=7, ~q=7^r$ with $r$ odd, then $\Psi_{7,1}^{[\sqrt{q^*}]}(X)$ is the minimal polynomial of $\theta$.\\
But $\Psi_{7,1}(X)\Psi_{7,1}(-X)=\Phi_{7}(X^2)$ and therefore, $$\Psi_{7,1}(X)=X^6 \pm \sqrt{-7}X^5 - 3X^4 \mp \sqrt{-7}X^3 + 3X^2  \pm \sqrt{-7}X - 1.$$
This gives,
$$\Psi_{7,1}^{[\sqrt{q^*}]}(X)=X^6 \pm \sqrt{7q}X^5 + 3qX^4 \pm q\sqrt{7q}X^3 + 3q^2X^2  \pm q^2\sqrt{7q}X +q^3$$
\item $t=9,~ p=3, ~q=3^r$ with $r$ odd, then $\Psi_{3,3}^{[\sqrt{q^*}]}(X)$ is the minimal polynomial of $\theta$.\\
But, $$\Psi_{3,3}(X)=\Psi_{3,1}(X^3)=X^6 \pm  \sqrt{-3}X^3 - 1$$ and hence,
$$\Psi_{3,3}^{[\sqrt{q^*}]}(X)=X^6 \pm  q\sqrt{3q}X^3 +q^3.$$
\end{enumerate}
\section{Dimension 4}
\subsection{Normal Case} In this case $\phi(4t)= 2g =8 $. That implies $4t \in  \{16, 20, 24\}$ or $t \in \{4,5, 6\}$.
We have the following possibilities.
\begin{enumerate}
\item If $t=4$ then,  
\begin{enumerate}
\item $q$ is odd and $t$ is even,
\item $q$ is even and $t \not\equiv 2 \mod 4$. 
\end{enumerate}
Therefore the minimal polynomial is $$\Phi_{16}^{[\sqrt{q}]}(X)=X^8+q^4$$ for all primes $p$.

\item If $t=5$ then, 
\begin{enumerate}
\item $q$ is odd and $p \nmid t $ but $p \equiv 1 \mod 4$, this implies $q \neq (-5)^r,~ r$ odd.
\item $q$ is even and $t \not\equiv 2 \mod 4$. 
\end{enumerate}
 Therefore the minimal polynomial is $$\Phi_{20}^{[\sqrt{q}]}(X)=X^8-qX^6+q^2X^4-q^3X^2+q^4$$ for all $q\neq(-5)^r,~ r$ odd.  
 \item If $t=6$ then,
 \begin{enumerate}
\item $q$ is odd and $t$ is even,
\end{enumerate}
Therefore the minimal polynomial is $$\Phi_{24}^{[\sqrt{q}]}(X)=X^8-q^2X^4+q^4$$ for all primes $p \neq 2$.
\end{enumerate}
\subsection{Exceptional Case} In this case $\frac{1}{2}\phi(4t)=2g=8$. That implies $4t \in  \{32, 40, 48, 60\}$ or $t \in \{8, 10, 12, 15\}$. We have the following possibilities.

\begin{enumerate}
\item Either $p$ is odd, $t$ is odd, $p$ divides $t$, $q \equiv 3 \mod 4$.
In that case $t=15$ then 
\begin{enumerate}
\item either $p=3,~ q=3^r$ with $r$ odd and  $\Psi_{3,5}^{[\sqrt{q^*}]}(X)$ is the minimal polynomial of $\theta$.\\
Since $\Psi_{3,5}(X)=\frac{\Psi_{3,1}(X^5)}{\Psi_{3,1}(X)}$, we have $\Psi_{3,5}^{[\sqrt{q^*}]}(X)
=X^8 \mp \sqrt{3q}X^7 +2qX^6 \mp q\sqrt{3q}X^5 + q^2X^4 \mp\sqrt{3q}X^3 + 2X^2 \pm q^3\sqrt{3q}X + q^4$ or 

 \item $p=5,~q=(-5)^r$ with $r$ odd and  $\Psi_{5,3}^{[\sqrt{q^*}]}(X)$ is the minimal polynomial of $\theta$.\\
We have, $\Psi_{5,3}(X)=\frac{ \Psi_{5,1}(X^3)}{\Psi_{5,1}(X)}=X^8 \pm \sqrt{5}X^7 + 2X^6 \pm \sqrt{5}X^5 + 3X^4 \pm \sqrt{5}X^3 + 2X^2 \pm \sqrt{5}X + 1$ and hence $\Psi_{5,3}^{[\sqrt{q^*}]}(X)=
X^8 \pm \sqrt{-5q}X^7 - 2qX^6 \mp q \sqrt{-5q}X^5 + 3q^2X^4 \pm q^2 \sqrt{-5q}X^3 -2q^3X^2 \mp q^3\sqrt{-5q}X + q^4.$
\end{enumerate}
 \item $t \equiv 2 \mod 4, ~q=(\pm 2)^r$ with $r$ odd. In that case $t=10 =2 \cdot 5 \mod 4 $.
  \begin{enumerate}
  \item If $q>0$ then the minimal polynomial of $\theta$ is  $\Psi_{2,5}^{[\sqrt{q}]}(X)$.\\
But, $$\Psi_{2,5}(X)=\frac{\Psi_{2,1}(X^5)}{\Psi_{2,1}(X)}=X^8 \pm \sqrt{2}X^7 + X^6 - X^4 + X^2 \pm \sqrt{2}X + 1$$ and hence\\
$$\Psi_{2,5}^{[\sqrt{q}]}(X)=X^8 \pm \sqrt{2q}X^7 +q X^6 -q^2 X^4 + q^3X^2 \pm q^3\sqrt{2q}X + q^4 .$$
\item   If $q<0$, then the minimal polynomial of $\theta$ is  $\Psi_{-2,5}^{[\sqrt{q}]}(X)$.\\
  But, $\Psi_{-2,5}(X)=X^8 \pm \sqrt{-2}X^7 - X^6 - X^4 - X^2 \mp  \sqrt{-2}X + 1$. Therefore 
   $\Psi_{-2,5}^{[\sqrt{q}]}(X)= X^8 \pm \sqrt{-2q}X^7 -q X^6 -q^2 X^4 -q^3 X^2 \mp  q^3\sqrt{-2q}X + q^4$.\\ 
   \end{enumerate}

\end{enumerate}

\section{Dimension 5}
\subsection{Normal Case} In this case $\phi(4t)= 2g =10 $ and there is no such $t$.
\subsection{Exceptional Case} In this case $\frac{1}{2}\phi(4t)=2g=10$. That implies $4t =44$ or $t=11$. 
In that case, $p=11,~ q=11^r$, then $\Psi_{11,1}^{[\sqrt{q^*}]}(X)$ is the minimal polynomial of $\theta$. But $\Psi_{11,1}(X)\Psi_{11,1}(-X)= \Phi_{11}(X^2)$ which implies
$\Psi_{11,1}(X)=X^{10} \mp \sqrt{-11}X^9 - 5X^8  \pm \sqrt{-11}X^7 - X^6 \pm \sqrt{-11}X^5 + X^4 \pm \sqrt{-11}X^3 + 5X^2 \mp \sqrt{-11}X - 1$. Hence,
$\Psi_{11,1}^{[\sqrt{q^*}]}(X)=X^{10} \mp \sqrt{11q}X^9 +5qX^8  \mp q\sqrt{11q}X^7 -q^2 X^6 \pm q^2\sqrt{11q}X^5 -q^3X^4 \mp q^3\sqrt{11q}X^3 + 5q^4X^2 \mp q^4\sqrt{11q}X +q^5$.
\section{Dimension 6}
\subsection{Normal Case} In this case $\phi(4t)= 2g = 12$. That implies $4t \in  \{28, 36 \}$ or $t \in \{ 7, 9 \}$.
We have the following possibilities
\begin{enumerate}
\item If $t=7$ then,
\begin{enumerate}
\item $q$ is odd and $p\nmid t$ but $-7 \equiv 1 \mod 4$ i.e; $q\neq 7^r,~ r$ odd.
\item $q$ is even and $t \not\equiv 2 \mod 4$. 
\end{enumerate}
Therefore the minimal polynomial is $$\Phi_{28}^{[\sqrt{q}]}(X)=X^{12} - qX^{10} + q^2X^8 -q^3 X^6 + q^4X^4 -q^5 X^2 + q^6$$ for $q\neq 7^r,~ r$ odd.
\item If $t=9$ then,
\begin{enumerate}
\item $q$ is odd and $p\nmid t$ but $-3 \equiv 1 \mod 4$ i.e; $q\neq 3^r,~ r$ odd.
\item $q$ is even and $t \not\equiv 2 \mod 4$. 
\end{enumerate}
Therefore the minimal polynomial is $$\Phi_{36}^{[\sqrt{q}]}(X)=X^{12} - q^3X^6 + q^6 $$ for $q\neq 3^r,~ r$ odd.

\end{enumerate}

\subsection{Exceptional Case} In this case $\frac{1}{2}\phi(4t)=2g=12$. That implies $4t \in  \{52, 56, 72, 84\}$ or $t \in \{13, 14, 18, 21\}$.
\begin{enumerate}
\item Either $p$ is odd, $t$ is odd, $p$ divides $t$, $q \equiv 3 \mod 4$
\begin{enumerate}
\item $t=p=13$, $q=(-13)^r$ and $\Psi_{13,1}^{[\sqrt{q^*}]}(X)$ is the minimal polynomial of $\theta$.
But $\Psi_{13,1}(X)\Psi_{13,1}(-X)= \Phi_{13}(X^2)$ which implies  $\Psi_{13,1}(X)= X^{12} \pm \sqrt{13}X^{11} + 7X^{10} \pm 3\sqrt{13}X^9 + 15X^8 \pm 5\sqrt{13}X^7 + 19X^6 \pm 5\sqrt{13}X^5 + 15X^4
    \pm 3\sqrt{13}X^3 + 7X^2 \pm \sqrt{13}X + 1$ and therefore 
    $\Psi_{13,1}^{[\sqrt{q^*}]}(X)= X^{12} \pm \sqrt{-13q}X^{11} - 7qX^{10} \mp 3q\sqrt{-13q}X^9 + 15q^2X^8 \pm 5q^2 \sqrt{-13q}X^7-19q^3X^6 \mp 5q^3 \sqrt{-13q}X^5 + 15q^4X^4\pm 3q^4\sqrt{-13q}X^3 - 7q^5X^2 \mp q^5 \sqrt{-13q}X + q^6$.
\item $t=21$, then 
\begin{enumerate}
\item $p=3,~ q=3^r$ with $r$ odd and  $\Psi_{3,7}^{[\sqrt{q^*}]}(X)$ is the minimal polynomial of $\theta$.\\
But $ \Psi_{3,7}(X)=\frac{\Psi_{3,1}(X^7)}{\Psi_{3,1}(X)}=X^{12} \pm \sqrt{-3}X^{11} - 2X^{10} \mp \sqrt{-3}X^9 + X^8 + X^6 + X^4 \pm \sqrt{-3}X^3 - 2X^2 \mp \sqrt{-3}X + 1$\\ and hence
$ \Psi_{3,7}^{[\sqrt{q^*}]}(X)=X^{12} \pm \sqrt{3q}X^{11} + 2qX^{10} \pm q\sqrt{3q}X^9 + q^2X^8 -q^3 X^6 + q^4X^4 \pm q^4\sqrt{3q}X^3 + 2q^5X^2 \pm q^5\sqrt{3q}X + q^6$ or 
\item $p=7,~q=7^r$ with $r$ odd and  $\Psi_{7,3}^{[\sqrt{q^*}]}(X)$ is the minimal polynomial of $\theta$.\\
\end{enumerate}
$\Psi_{7,3}(X)=\frac{\Psi_{7,1}(X^3)}{\Psi_{7,1}(X)}=X^{12} \pm \sqrt{-7}X^{11} - 4X^{10} \mp \sqrt{-7}X^9 - X^8 \mp 2\sqrt{-7}X^7 + 7X^6 \pm 2\sqrt{-7}X^5 - X^4  \pm\sqrt{-7}X^3 -4X^2 \mp \sqrt{-7}X + 1.$ Hence, 
$\Psi_{7,3}^{[\sqrt{q^*}]}(X)=X^{12} \pm \sqrt{7q}X^{11} +4qX^{10} \pm q\sqrt{7q}X^9 - q^2X^8 \mp 2q^2\sqrt{7q}X^7 - 7q^3X^6 \mp  2q^3\sqrt{7q}X^5 -q^4 X^4  \pm q^4\sqrt{7q}X^3 +4q^5X^2 \pm q^5 \sqrt{7q}X + q^6$.

 \end{enumerate}
 \item $t \equiv 2 \mod 4, ~q$ even. 
 \begin{enumerate}
 \item $t=14=2\cdot 7 \mod 4$.

 \begin{enumerate}
  \item If $q>0$ then the minimal polynomial of $\theta$ is  $\Psi_{2,7}^{[\sqrt{q}]}(X)$.\\
But  $\Psi_{2,7}(X)=\frac{\Psi_{2,1}(X^7)}{\Psi_{2,1}(X)}=X^{12} \mp \sqrt{2}X^{11} + X^{10} - X^8 \pm \sqrt{2}X^7 - X^6 \pm \sqrt{2}X^5 - X^4 + X^2 \mp \sqrt{2}X + 1$ and hence,
 $\Psi_{2,7}^{[\sqrt{q}]}(X)=X^{12} \mp \sqrt{2q}X^{11} + qX^{10} -q^2 X^8 \pm q^2\sqrt{2q}X^7 - q^3X^6 \pm q^3\sqrt{2q}X^5 - q^4X^4 + q^5X^2 \mp q^5\sqrt{2q}X + q^6$.
\item   If $q<0$, then the minimal polynomial of $\theta$ is  $\Psi_{-2,7}^{[\sqrt{q}]}(X)$.\\
  But, $\Psi_{-2,7}(X)=X^{12} \pm \sqrt{-2}X^{11} - X^{10} - X^8 \mp \sqrt{-2}X^7 + X^6 \pm \sqrt{-2}X^5 - X^4 - X^2 \mp \sqrt{-2}X + 1$

$\Psi_{-2,7}^{[\sqrt{q}]}(X)=X^{12} \pm \sqrt{-2q}X^{11} -q X^{10} -q^2 X^8 \mp q^2 \sqrt{-2q}X^7 + q^3X^6 \pm q^3\sqrt{-2q}X^5 -q^4 X^4 -q^5 X^2 \mp q^5\sqrt{-2q}X + q^6$
 \end{enumerate}
 
 \item $t=18=2\cdot 9 \mod 4$ with $q$ even. In that case, 
  \begin{enumerate}
  \item If $q>0$ then the minimal polynomial of $\theta$ is  $\Psi_{2,9}^{[\sqrt{q}]}(X)$.\\
But, 
 $$\Psi_{2,9}(X)= \frac{\Psi_{2,1}(X^{3^2})}{\Psi_{2,1}(X^3)}= X^{12} \pm \sqrt{2}X^9 + X^6 \pm \sqrt{2}X^3 + 1.$$ and hence,
  $$\Psi_{2,9}^{[\sqrt{q}]}(X)= X^{12} \pm q\sqrt{2q}X^9 +q^3 X^6 \pm q^4\sqrt{2q}X^3 + q^6.$$
 
\item   If $q<0$, then the minimal polynomial of $\theta$ is  $\Psi_{-2,9}^{[\sqrt{q}]}(X)$.\\
  But, $\Psi_{-2,9}(X)= X^{12} \pm \sqrt{-2}X^9 - X^6  \mp \sqrt{-2}X^3 + 1$. Therefore 
   $\Psi_{-2,9}^{[\sqrt{q}]}(X)= X^{12} \pm q\sqrt{-2q}X^9 -q^3 X^6 \pm q^4\sqrt{-2q}X^3 + q^6.$ 
   \end{enumerate}
 \end{enumerate}
\end{enumerate}
\section{Dimension 7}
\subsection{Normal Case} In this case $\phi(4t)= 2g = 14$ and there is no such $t$. 
\subsection{Exceptional Case} In this case $\frac{1}{2}\phi(4t)=2g=14$ and there is no such $t$.

\section{Even Case}
If $P(X)$ is a supersingular Weil polynomial with all real roots, over $\mathbb{F}_q$ with $q=p^n$, $n$ is even, then $P(X)=X\pm \sqrt{q}$, then we have least common multiple of local invariants as $2$, hence $P(X)^2$ is a characteristic polynomial of  simple supersingular abelian variety of dimension $1$. 
We have the following Theorem which states all the possible characteristic polynomials of abelian varieties over $\mathbb{F}_q$.
\begin{theorem}\label{EAV}
Let $\phi(m)=2g$. Then $(\Phi_m^{[\sqrt{q}]}(X))^e$ is a
characteristic polynomial of Frobenius endomorphism of a simple supersingular abelian
variety of dimension $ge$ over $\mathbb{F}_q,~ q=p^n, ~n$ even,
 where 
$$r= \left\{ \begin{array}{ll}
        \mbox{order of  $p$ in the multiplicative group $U(\mathbb{Z}/ m\mathbb{Z})$ }, & \mbox{if $(p,m)=1$};\\
         \mbox{$f(p^k-p^{k-1})$, $f$ is order of  $p$ in the multiplicative group $U(\mathbb{Z}/ s\mathbb{Z})$} & \mbox{if $m=p^ks$}.\end{array} \right.$$
and $$e=\left\{ \begin{array}{ll}
        \mbox{$1$}, & \mbox{if $r$ is even};\\
         \mbox{$2$}, & \mbox{if $r$ is odd}.\end{array} \right.$$

\end{theorem}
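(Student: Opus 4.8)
The plan is to run the Honda--Tate / Tate-dimension machinery of Theorem \ref{dim} with the concrete Weil number $\pi=\sqrt{q}\,\zeta_m$, exploiting that for $n$ even $\sqrt{q}=p^{n/2}$ is a rational integer. First I would observe that $\zeta_m=\pi/\sqrt{q}\in\mathbb{Q}(\pi)$, so $\mathbb{Q}(\pi)=\mathbb{Q}(\zeta_m)$ is the $m$-th cyclotomic field; since $\sqrt{q}\in\mathbb{Q}$ and $\Phi_m$ is irreducible, the scaled polynomial $\Phi_m^{[\sqrt{q}]}(X)=q^{\,\phi(m)/2}\Phi_m(X/\sqrt{q})$ is irreducible over $\mathbb{Q}$ of degree $\phi(m)=2g$ and is the minimal polynomial of $\pi$. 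As $\pi$ is a supersingular Weil-$q$-number, Honda--Tate (Theorem \ref{H-T}) attaches to it a simple supersingular abelian variety $A$ with $P_A(X)=\big(\Phi_m^{[\sqrt{q}]}(X)\big)^{e}$ for some $e\ge 1$. Comparing degrees, $2\dim A=e\,\phi(m)$, and by Theorem \ref{dim}(1) together with the Brauer-group description recalled in the text, $e=[End_k(A)\otimes\mathbb{Q}:\mathbb{Q}(\pi)]^{1/2}$ equals the least common multiple of the orders in $\mathbb{Q}/\mathbb{Z}$ of the local invariants. Thus the whole theorem reduces to computing these invariants, and $\dim A=ge$ will follow at once.

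For the invariants I would use the formula of Theorem \ref{dim}(2)(b). For $m\ge 3$ (forced by $\phi(m)=2g\ge 2$) the field $\mathbb{Q}(\zeta_m)$ is totally imaginary, so there are no real places and only the primes $\mathfrak{p}\mid p$ contribute. Each root of unity $\zeta_m$ is a global unit, so $v_\mathfrak{p}(\pi)=\tfrac{n}{2}v_\mathfrak{p}(p)=\tfrac{n}{2}e_\mathfrak{p}$, while $v_\mathfrak{p}(q)=n\,e_\mathfrak{p}$ and $[\mathbb{Q}(\pi)_\mathfrak{p}:\mathbb{Q}_p]=e_\mathfrak{p}f_\mathfrak{p}$, where $e_\mathfrak{p},f_\mathfrak{p}$ are the ramification and residue degrees of $\mathfrak{p}$ over $p$ in $\mathbb{Q}(\zeta_m)$. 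Substituting gives
\[
inv_\mathfrak{p}\big(End_k(A)\otimes\mathbb{Q}\big)\equiv \frac{(n/2)e_\mathfrak{p}}{n\,e_\mathfrak{p}}\,e_\mathfrak{p}f_\mathfrak{p}=\tfrac{1}{2}\,e_\mathfrak{p}f_\mathfrak{p}\pmod{\mathbb{Z}}.
\]
Because $\mathbb{Q}(\zeta_m)/\mathbb{Q}$ is Galois, every prime above $p$ has the same pair $(e_\mathfrak{p},f_\mathfrak{p})$, hence the same invariant; so the least common multiple of their orders is just the order of the single value $\tfrac12 e_\mathfrak{p}f_\mathfrak{p}$, namely $1$ if $e_\mathfrak{p}f_\mathfrak{p}$ is even and $2$ if it is odd.

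It remains to identify $r:=e_\mathfrak{p}f_\mathfrak{p}$ (the local degree) with the quantity in the statement by reading off the classical factorization of $p$ in the cyclotomic field. If $(p,m)=1$ then $p$ is unramified, $e_\mathfrak{p}=1$, and $f_\mathfrak{p}$ is the order of $p$ in $U(\mathbb{Z}/m\mathbb{Z})$, so $r=f_\mathfrak{p}$ matches the first line. If $m=p^k s$ with $(p,s)=1$ and $k\ge 1$, I would split $\mathbb{Q}(\zeta_m)=\mathbb{Q}(\zeta_{p^k})\,\mathbb{Q}(\zeta_s)$: here $p$ is totally ramified in $\mathbb{Q}(\zeta_{p^k})$ with $e_\mathfrak{p}=\phi(p^k)=p^k-p^{k-1}$ and unramified in $\mathbb{Q}(\zeta_s)$ with residue degree $f$ equal to the order of $p$ in $U(\mathbb{Z}/s\mathbb{Z})$; multiplying gives $r=f(p^k-p^{k-1})$, the second line. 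In both cases $inv_\mathfrak{p}\equiv r/2$, so $e=1$ when $r$ is even and $e=2$ when $r$ is odd, and $P_A(X)=(\Phi_m^{[\sqrt{q}]}(X))^{e}$ is the characteristic polynomial of a simple supersingular abelian variety of dimension $g e$, as claimed.

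The main obstacle I anticipate is the ramified case $m=p^k s$: one must verify that the local degree of $p$ in $\mathbb{Q}(\zeta_m)$ is exactly $f(p^k-p^{k-1})$, i.e. that the ramification coming from the $p$-part and the residue degree coming from the prime-to-$p$ part multiply independently, and confirm that $\zeta_m$ remains a $\mathfrak{p}$-adic unit so that $v_\mathfrak{p}(\pi)$ picks up no contribution from the $p$-power part of the root of unity. The remaining bookkeeping, namely the parity determination of $e$ and the reduction $\dim A=ge$, is then routine.
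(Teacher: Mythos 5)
Your proposal is correct and follows essentially the same route as the paper: Honda--Tate attaches a simple supersingular abelian variety to $\pi=\sqrt{q}\,\zeta_m$ with minimal polynomial $\Phi_m^{[\sqrt{q}]}(X)$, the local invariants at the primes above $p$ are shown to equal half the local degree, that local degree is identified with $r$ via the splitting of $p$ in $\mathbb{Q}(\zeta_m)$, and $\dim A=ge$ follows from Theorem \ref{dim}(1) with $e$ the least common multiple of the orders of the invariants. The only cosmetic difference is that you evaluate the invariants directly from the valuation formula of Theorem \ref{dim}(2)(b) together with classical cyclotomic ramification theory, whereas the paper reads them off the constant terms and degrees of the irreducible factors of $\Phi_m^{[\sqrt{q}]}(X)$ over $\mathbb{Q}_p$, citing Serre for those degrees---the same computation in different clothing.
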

\begin{proof}Let $P(X)$ be an irreducible supersingular Weil-$q$-polynomial of degree $2g$.
Since $q$ is an even power of prime, $ \sqrt{q} \in \mathbb{Z}$. Using
Honda-Tate theorem  \ref{H-T}, we get $\frac{1}{q^g}(P(\sqrt{q}X))$ is an irreducible cyclotomic polynomial $\Phi_m$ over $\mathbb{Z}$ of degree $2g$, i.e. $\phi(m)=2g$.
Therefore $P(X)=\Phi_m^{[\sqrt{q}]}(X)$ is an irreducible supersingular Weil polynomial. 
To determine the dimension of the corresponding abelian variety to $P(X)$, we will need to factorise $P(X)$ over $\mathbb{Q}_p$.
If $P (X) = \displaystyle\prod_{i}f_i (X)$ is the factorisation over $\mathbb{Q}_p [X]$, then\\

$inv_{\mathfrak{p}_i}(End_k (A)\otimes \mathbb{Q})\displaystyle \equiv \frac{v_p (f_i (0))}{ v_p (q)}  \mod \mathbb{Z}  \equiv \frac{\deg f_i}{2} \mod \mathbb{Z}$.\\
Since  $P(X)=\Phi_m^{[\sqrt{q}]}(X)$,  $inv_{\mathfrak{p}_i} (End_k (A)\otimes \mathbb{Q})\displaystyle = \frac{\deg r_i}{2} \mod \mathbb{Z}$, where $\Phi_m(X) = \displaystyle\prod_{i}r_i (X)$ over $\mathbb{Q}_p$. But it follows from the chapter IV.4 in \cite{Serre} that $\deg r_i= r $,  where 
$$r= \left\{ \begin{array}{ll}
        \mbox{order of  $p$ in the multiplicative group $U(\mathbb{Z}/ m\mathbb{Z})$ }, & \mbox{if $(p,m)=1$};\\
         \mbox{$f(p^k-p^{k-1})$, where $f$ is order of  $p$ in the multiplicative group $U(\mathbb{Z}/s\mathbb{Z})$,} & \mbox{if $m=p^ks$}.\end{array} \right.$$
Hence
$$ inv_{\mathfrak{p}_i} (End_k (A)\otimes \mathbb{Q}) = 
                                               \left\{ \begin{array}{ll}
         1 & \mbox{if $r$ is even};\\
         2 & \mbox{if $r$ is odd}.\end{array} \right. $$

 $~~~~~~~~~~~~~~~~~~~~~~~~~~~~~~~~~~~~~~~~~~~~~~=~e$ .\\ 
From Theorem \ref{dim}, $2 \dim A=e \deg P(X)$ or $\dim A=eg$.
\end{proof}
\section{Summary of results for dimensions $1$ to $7$}
We gather the list of  characteristic polynomials, from  dimensions $1$ to  $7$.
\begin{theorem}
Let $A$ be an supersingular simple abelian variety over $\mathbb{F}_q$, where $q=p^n$, $n$ odd. Then characteristic polynomial of $A$  is given by \\
\begin{enumerate}
\item{Dimension 1 \emph{(Deuring and Waterhouse\cite{deuring1941typen, Waterhouse})}}
\begin{enumerate}

\item $p=2:X^2 \pm\sqrt{2q}X+q$,
\item $p=3:X^2\pm \sqrt{3q}X+q$,
\item $X^2+q$.

\end{enumerate}
\item{Dimension 2 \emph{(C.Xing, D.Maisner and E.Nart\cite{Xing, Maiser.Nart})}}
\begin{enumerate}
\item $p\neq 3: X^4-qX^2+q^2$,
\item $X^4 + qX^2+q^2$,
\item $p=2:X^4\pm \sqrt{pq}X^3+qX^2\pm q \sqrt{pq}X+q^2$,
\item $p=5:X^4\pm \sqrt{pq}X^3+3qX^2\pm q \sqrt{pq}X+q^2$,
\item $(X^2-q)^2$,
\item $p \neq 2:X^4+q^2$.
\end{enumerate}

\item{Dimension 3  \emph{(E.Nart, C.Ritzenthaler and S.Haloui \cite{haloui2010characteristic, Nart})}}

\begin{enumerate}
\item $p=3: X^6 \pm q\sqrt{pq}X^3+q^3$, or
\item $p=7:X^6\pm\sqrt{pq}X^5+3qX^4\pm q\sqrt{pq}X^3+3q^2X^2\pm q^2\sqrt{pq}X+q^3$.
\end{enumerate}
\item{Dimension 4 \emph{( S.Haloui, V.Singh \cite{VSaf})}}
\begin{enumerate}
\item $X^8+q^4$,
\item $X^8- qX^6+q^2X^4- q^3X^2+q^4$,
\item $ p \neq 5: X^8+ qX^6+q^2X^4+q^3X^2+q^4$,
\item $p \neq 2 : X^8-q^2X^4+q^4$,
\item $p=3: X^8\pm \sqrt{3q}X^7+2qX^6\pm q \sqrt{3q}X^5+q^2X^4\pm q^2 \sqrt{3q}X^3+2q^3X^2\pm q^3\sqrt{3q}X+q^4$,
\item $p=5: X^8\pm \sqrt{5q}X^7+2qX^6\pm q \sqrt{5q}X^5+3q^2X^4\pm q^2 \sqrt{5q}X^3+2q^3X^2\pm q^3\sqrt{5q}X+q^4$,
\item $p=2: X^8\pm \sqrt{2q}X^7+qX^6-q^2X^4+q^3X^2 \pm q^3\sqrt{2q}X+q^4$.

\end{enumerate}
\item{Dimension 5}
\begin{enumerate}
\item $X^{10} \mp \sqrt{11q}X^9 +5qX^8  \mp q\sqrt{11q}X^7 -q^2 X^6 \pm q^2\sqrt{11q}X^5 -q^3X^4 \mp q^3\sqrt{11q}X^3 + 5q^4X^2 \mp q^4\sqrt{11q}X +q^5$.
\end{enumerate}
\item{Dimension 6}
\begin{enumerate}
\item  $X^{12} + qX^{10}+q^2X^8 + q^3X^6+q^4X^4 + q^5X^2+q^6$,
\item $p \neq 7: X^{12} - qX^{10}+q^2X^8 - q^3X^6+q^4X^4 - q^5X^2+q^6$,
\item $ X^{12}+q^3X^6+q^6$,
\item $p \neq 3 :  X^{12}-q^3X^6+q^6$,

\item $X^{12} \pm \sqrt{13q}X^{11} + 7qX^{10} \pm  3q\sqrt{13q}X^9 + 15q^2X^8 \pm 5q^2 \sqrt{13q}X^7+19q^3X^6 \pm 5q^3 \sqrt{13q}X^5 + 15q^4X^4\pm 3q^4\sqrt{13q}X^3 + 7q^5X^2 \pm  q^5 \sqrt{13q}X + q^6$

\item $X^{12} \pm \sqrt{3q}X^{11} + 2qX^{10} \pm q\sqrt{3q}X^9 + q^2X^8 -q^3 X^6 + q^4X^4 \pm q^4\sqrt{3q}X^3 + 2q^5X^2 \pm q^5\sqrt{3q}X + q^6.$

\item $X^{12} \pm \sqrt{7q}X^{11} +4qX^{10} \pm q\sqrt{7q}X^9 - q^2X^8 \mp 2q^2\sqrt{7q}X^7 - 7q^3X^6 \mp  2q^3\sqrt{7q}X^5 -q^4 X^4  \pm q^4\sqrt{7q}X^3 +4q^5X^2 \pm q^5 \sqrt{7q}X + q^6$,
 
\item $X^{12} \mp \sqrt{2q}X^{11} + qX^{10} -q^2 X^8 \pm q^2\sqrt{2q}X^7 - q^3X^6 \pm q^3\sqrt{2q}X^5 - q^4X^4 + q^5X^2 \mp q^5\sqrt{2q}X + q^6$,
\item $X^{12} \pm q\sqrt{2q}X^9 +q^3 X^6 \pm q^4\sqrt{2q}X^3 + q^6$.
\end{enumerate}
\item{Dimension 7}\\
There is no supersingular simple abelian variety of dimension $7$.
\end{enumerate}
\end{theorem}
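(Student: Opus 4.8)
The plan is to treat this theorem as the synthesis of the dimension-by-dimension computations of the preceding sections, organized through the Honda--Tate correspondence. First I would invoke Theorem \ref{H-T}: every simple supersingular abelian variety $A/\mathbb{F}_q$ determines, up to conjugacy, a supersingular Weil-$q$-number $\pi_A=\sqrt{q}\,\zeta$ with $\zeta$ a root of unity, and $P_A(X)=h(X)^e$, where $h$ is the minimal polynomial of $\pi_A$ and $e$ is its multiplicity. By Theorem \ref{dim}, $2\dim A=e\cdot\deg h$, and $e$ is the least common multiple of the orders of the local invariants of the endomorphism algebra. Using the normalization recorded just before Theorem \ref{odd}, I would write $\pi_A=\sqrt{q}\,\zeta_{4t}$ for a suitable $t$, so that $h$ is exactly the minimal polynomial classified in Theorem \ref{odd}.

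Next I would separate the real-root case. If $P_A$ has a real root then $\pi_A=\pm\sqrt{q}$; since $n$ is odd, $\sqrt{q}$ is irrational, so $h(X)=X^2-q$, and the Remark following the first theorem of Section~3 shows the local invariants have least common multiple $2$. Hence $e=2$ and $2\dim A=e\cdot\deg h=4$, producing exactly the dimension-$2$ entry $(X^2-q)^2$ and nothing in any other dimension. If instead $P_A$ has no real root, the first (irreducibility) theorem of Section~3 gives $e=1$, so $P_A=h$ is irreducible of degree $2g=2\dim A$.

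The core of the argument is the finite enumeration. By Remark \ref{R1}(1), $\deg h=\phi(4t)$ in the normal case of Theorem \ref{odd} and $\deg h=\phi(4t)/2$ in the exceptional case. For each target $g\in\{1,\dots,7\}$ I would therefore solve $\phi(4t)=2g$ (normal) and $\phi(4t)=4g$ (exceptional); since $\phi$ attains each value only finitely often, each equation has finitely many solutions $t$, which I would list explicitly. For every admissible $t$ I would read off from Theorem \ref{odd} which branch applies, thereby fixing the constraints on $p$ and on $q \bmod 4$, and then write down the polynomial: $\Phi_{4t}^{[\sqrt{q}]}(X)$ in the normal case and one of $\Psi_{p,t/p}^{[\sqrt{q^{*}}]}(X)$ or $\Psi_{\pm 2,s}^{[\sqrt{q}]}(X)$ in the exceptional case. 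I would expand the underlying $\Psi$ using the recursions of Remark \ref{R1}(5) together with the identity $\Psi_{p,t}(X)\Psi_{p,t}(-X)=\Phi_{pt}(X^2)$. These expansions are precisely the displayed formulas of the dimension-specific sections, so assembling them yields the stated list.

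The main obstacle --- indeed the only point beyond mechanical verification --- is completeness together with the non-existence claim in dimension $7$. Completeness holds because Theorem \ref{odd} exhausts every possibility for the minimal polynomial of $\sqrt{q}\,\zeta_{4t}$, so no supersingular Weil number escapes the classification. For dimension $7$ I would check the purely arithmetic facts that $\phi(4t)=14$ has no solution (as $14$ is a nontotient) and that $\phi(4t)=28$ forces $29\mid 4t$ with $4t\in\{29,58\}$, neither of which is divisible by $4$. Consequently neither the normal nor the exceptional case can occur in dimension $7$, and there is no simple supersingular abelian variety of that dimension over $\mathbb{F}_q$ with $q=p^n$, $n$ odd. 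What remains is the bookkeeping of signs and of the congruence conditions attached to each pair $(p,q)$, all of which is routine once the recursions are in hand.
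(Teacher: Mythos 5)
Your proposal is correct and follows essentially the same route as the paper, whose ``proof'' of this theorem is just the aggregation of Sections 3--9: Honda--Tate plus the irreducibility theorem for the non-real case, the remark giving $(X^2-q)^2$ in dimension $2$ for the real case, the enumeration of $t$ via $\phi(4t)=2g$ (normal) and $\phi(4t)=4g$ (exceptional) from Theorem \ref{odd} and Remark \ref{R1}, and the expansion of the $\Psi$ polynomials by the recursions and the identity $\Psi_{p,t}(X)\Psi_{p,t}(-X)=\Phi_{pt}(X^2)$. Your only addition is to make the dimension-$7$ nonexistence explicit ($14$ is a nontotient and $\phi^{-1}(28)=\{29,58\}$ contains no multiple of $4$), which the paper merely asserts as ``there is no such $t$.''
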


\begin{theorem}
Let $A$ be an supersingular simple abelian variety over $\mathbb{F}_q$, where $q=p^n$, $n$ even. Then characteristic polynomial of $A$  is given by \\
\begin{enumerate}
\item{Dimension 1 \emph{(Deuring and Waterhouse\cite{deuring1941typen, Waterhouse})}}
\begin{enumerate}
\item $X^2+X\sqrt{q}+q $, $~~p \not\equiv 1 \mod  3$,
\item $X^2+q$,             $~~p\not\equiv  1 \mod 4$,
\item $X^2-X\sqrt{q}+q  $ , $~~ p \not\equiv  1 \mod 6$,
\item $(X\pm \sqrt{q})^2$.
\end{enumerate}
\item{Dimension 2 \emph{(C.Xing, D.Maisner and E.Nart\cite{Xing, Maiser.Nart})}}
\begin{enumerate}
\item $(X^2+X\sqrt{q}+q)^2 $, $~~p \equiv 1 \mod  3$,
\item $(X^2+q)^2$,             $~~p \equiv  1 \mod 4$,
\item $(X^2-X\sqrt{q}+q)^2 $ , $~~p \equiv  1 \mod 6$,
\item $X^4+\sqrt{q}X^3+qX^2+q^{3/2}X+q^2$,  $~~p \not\equiv 1 \mod 5$,
\item $X^4+q^2$,                 $~~p \not\equiv 1 \mod  8$,
\item $X^4-\sqrt{q}X^3+qX^2-q^{3/2}X+q^2$, $~~p \not\equiv 1 \mod   10$,
\item $X^4-qX^2+q^2$,  $~~p \not\equiv 1 \mod 12$.
\end{enumerate}

\item{Dimension 3  \emph{(E.Nart, C.Ritzenthaler and S.Haloui \cite{haloui2010characteristic, Nart})}}
\begin{enumerate}
\item $X^6+\sqrt{q}X^5+qX^4+q^{3/2}X^3+q^2X^2+q^{5/2}X+q^3$, $~~p \not\equiv 1,~2,~4 \mod  7$,
\item $X^6+q^{3/2}X^3+q^3$,    $~~p \not\equiv 1~,~4,~7 \mod  9$,
\item $X^6-\sqrt{q}X^5+qX^4-q^{3/2}X^3+q^2X^2-q^{5/2}X+q^3$,  $~~p \not\equiv 1,~9,~11 \mod 14$,
\item $X^6-q^{3/2}X^3+q^3$,  $~~p \not\equiv 1,7,13 \mod 18$.
\end{enumerate}
\item{Dimension 4 \emph{( S.Haloui, V.Singh \cite{VSaf})}}
\begin{enumerate}
\item $(X^4+\sqrt{q}X^3+qX^2+q^{3/2}X+q^2)^2$,  $~~p \equiv 1 \mod 5$,
\item $(X^4+q^2)^2$,                 $~~p \equiv 1 \mod  8$,
\item $(X^4-\sqrt{q}X^3+qX^2-q^{3/2}X+q^2)^2$, $~~p \equiv 1 \mod   10$,
\item $(X^4-qX^2+q^2)^2$,  $~~p \equiv 1 \mod 12$,
\item $X^8-\sqrt{q}X^7+q^{3/2}X^5-q^2X^4+q^{5/2}X^3-q^{7/2}X+q^4$, $p \not\equiv 1 \mod 15$,
\item $X^8+q^4$, $p \not\equiv 1 \mod 16$,
\item $X^8-qX^6+q^2X^4-q^3X^2+q^4$, $p \not\equiv 1 \mod 20$,
\item $X^8-q^2X^4+q^4$, $p \not\equiv 1 \mod 24$,
\item $X^8+\sqrt{q}X^7-q^{3/2}X^5-q^2X^4-q^{5/2}X^3+q^{7/2}X+q^4$, $p \not\equiv 1 \mod 30$.
\end{enumerate}
\item{Dimension 5}
\begin{enumerate}
\item $X^{10}+\sqrt{q}X^9+qX^8+q^{3/2}X^7+q^2X^6+q^{5/2}X^5+q^3X^4+q^{7/2}X^3+q^4X^2+q^{9/2}X+q^5$, 
$~~p \not\equiv 1,3,4,5,9 \mod 11$,
\item $X^{10}-\sqrt{q}X^9+qX^8-q^{3/2}X^7+q^2X^6-q^{5/2}X^5+q^3X^4-q^{7/2}X^3+q^4X^2-q^{9/2}X+q^5$,
$~~p \not\equiv 1,3,5,9,15 \mod 22$.
\end{enumerate}
\item{Dimension 6}
\begin{enumerate}
\item $(X^6+\sqrt{q}X^5+qX^4+q^{3/2}X^3+q^2X^2+q^{5/2}X+q^3)^2$, $~~p \equiv 1~,~2,~4 \mod  7$,
\item $(X^6+q^{3/2}X^3+q^3)^2$,    $~~p \equiv 1~,~4,~7 \mod  9$, 
\item $(X^6-\sqrt{q}X^5+qX^4-q^{3/2}X^3+q^2X^2-q^{5/2}X+q^3)^2$,  $~~p \equiv 1~,~9,~11 \mod 14$,
\item $(X^6-q^{3/2}X^3+q^3)^2$,  $~~p \equiv 1,7,13 \mod 18$,
\item $X^{12}+\sqrt{q}X^{11}+qX^{10}+q^{3/2}X^9+q^2X^8+q^{5/2}X^7+q^3X^6+q^{7/2}X^5+q^4X^4+q^{9/2}X^3+q^5X^2+q^{11/2}X+q^6$,
$~~p \not\equiv 1,3,9 \mod 13$,
\item $X^{12}-\sqrt{q}X^{11}+q^{3/2}X^9-q^2X^8+q^3X^6-q^4X^4+q^{9/2}X^3-q^{11/2}X+q^6$,
 $~~p \not\equiv 1,4,16 \mod 21$,
\item $X^{12}-\sqrt{q}X^{11}+qX^{10}-q^{3/2}X^9+q^2X^8-q^{5/2}X^7+q^3X^6-q^{7/2}X^5+q^4X^4-q^{9/2}X^3+q^5X^2-q^{11/2}X+q^6$,
$~~p \not\equiv 1, 3, 9 \mod 26$,
\item $X^{12}-qX^{10}+q^2X^8-q^3X^6+q^4X^4-q^5X^2+q^6$, $~~p \not\equiv 1,9, 25 \mod 28$,
\item $X^{12}-q^3X^6+q^6$,$~~p \not\equiv 1, 13, 25 \mod 36$,
\item $X^{12}+\sqrt{q}X^{11}-q^{3/2}X^9-q^2X^8+q^3X^6-q^4X^4-q^{9/2}X^3+q^{11/2}X+q^6$,
$~~p \not\equiv 1, 25, 37 \mod 42$.
\end{enumerate}
\item{Dimension 7}\\
There is no supersingular simple abelian variety of dimension $7$.
\end{enumerate}
\end{theorem}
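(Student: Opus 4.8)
The plan is to derive the whole table from Theorem~\ref{EAV} together with the real-root computation opening the Even Case section, turning the problem into a finite combinatorial enumeration.

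First I would isolate the two sources of characteristic polynomials. Because $n$ is even, $\sqrt q\in\mathbb{Z}$, so the real Weil numbers $\pm\sqrt q$ are honest rationals; the local-invariant count in the preamble forces $e=2$ for them and yields the squared factors $(X\pm\sqrt q)^2$, which is the dimension~$1$ entry~(d). Every remaining isogeny class falls under Theorem~\ref{EAV}: its characteristic polynomial is $(\Phi_m^{[\sqrt q]}(X))^e$, where $\phi(m)=2g$, the dimension is $ge$, and $e=1$ or $2$ according as the order $r$ of $p$ in $U(\mathbb{Z}/m\mathbb{Z})$ (or $f(p^k-p^{k-1})$ when $m=p^ks$) is even or odd.

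The organizing observation I would use is that each modulus $m$ contributes to \emph{two} dimensions: when $r$ is even we obtain $\Phi_m^{[\sqrt q]}$ in dimension $g=\phi(m)/2$, and when $r$ is odd we obtain $(\Phi_m^{[\sqrt q]})^2$ in dimension $\phi(m)$. Since we stop at dimension~$7$, I only need the moduli with $\phi(m)\in\{2,4,6,8,10,12\}$; the absence of any $m$ with $\phi(m)=14$ (a nontotient) is exactly what makes dimension~$7$ empty. For each target $g$ I would list the finitely many solutions of $\phi(m)=2g$ and compute $\Phi_m^{[\sqrt q]}(X)$ by applying the substitution $f\mapsto \sqrt q^{\,\deg f}f(X/\sqrt q)$ to the known cyclotomic polynomial $\Phi_m$; this part is routine.

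The remaining and, I expect, most delicate step is to determine for each listed $m$ the precise set of residues $p\bmod m$ giving $r$ even versus $r$ odd, since these are exactly the congruence conditions decorating the entries. The classes of odd multiplicative order form the odd part (the $2'$-Hall subgroup) of $U(\mathbb{Z}/m\mathbb{Z})$, so I would read off the $2$-Sylow structure of each group to identify that subgroup, treating the prime-power moduli $m=p^ks$ via the refined order formula $r=f(p^k-p^{k-1})$. Matching the odd-order residues to the squared entries (for instance $p\equiv1\bmod3$ for $(X^2+\sqrt q X+q)^2$) and the even-order residues to the non-squared ones, and pairing these with the explicit polynomials, finishes the classification; what is left is then only a finite verification.
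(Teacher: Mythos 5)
Your proposal follows exactly the paper's route: the paper's entire proof is the one-line remark that the table ``follows from the straightforward calculations using Theorem~\ref{EAV}'', together with the real-root case $(X\pm\sqrt q)^2$ settled at the start of the Even Case section, and your enumeration of moduli $m$ with $\phi(m)=2g$, the substitution $\Phi_m\mapsto\Phi_m^{[\sqrt q]}$, and the even/odd-order analysis of $p$ in $U(\mathbb{Z}/m\mathbb{Z})$ (with the ramified formula $r=f(p^k-p^{k-1})$) is precisely what those calculations consist of. Your write-up is in fact more explicit than the paper's, and the nontotient observation for $14$ (together with the parity of $\phi$ ruling out $\phi(m)=7$) correctly accounts for the empty dimension~$7$.
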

\begin{proof}
The proof of this theorem, follows from the straightforward calculations using Theorem \ref{EAV}.
\end{proof}
\section{Existence of supersingular abelian varieties}
There are two important existential questions.
\begin{enumerate}
\item Given a positive integer $d$ and $q=p^n$, does there exist a simple supersingular abelian variety of dimension $d$ over $\mathbb{F}_q$?
\item How many isogeny classes of simple supersingular abelian varieties are there over  $\mathbb{F}_q$ for a  given dimension? 
\end{enumerate}
In this section, we will give partial answers to these questions.

\begin{theorem}
Let $g>2$ be an odd positive integer and $q=p^n, ~n$ even. Then the  characteristic polynomial of a supersingular simple abelian variety over $\mathbb{F}_q$ dimension $g$ is irreducible.
\end{theorem}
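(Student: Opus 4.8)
The plan is to use the classification machinery from Theorem~\ref{EAV} together with the observation that the only way a supersingular Weil polynomial $(\Phi_m^{[\sqrt{q}]}(X))^e$ fails to be irreducible is when $e=2$, and to show that this forces the dimension $g$ to be even. Recall from the even-case analysis that a simple supersingular abelian variety of dimension $g$ over $\mathbb{F}_q$ with $q=p^n$, $n$ even, has characteristic polynomial of the form $(\Phi_m^{[\sqrt{q}]}(X))^e$ where $\phi(m)=2g/e$, and $e\in\{1,2\}$ is governed by the parity of the degree $r$ of the irreducible factors of $\Phi_m$ over $\mathbb{Q}_p$: we have $e=1$ if $r$ is even and $e=2$ if $r$ is odd. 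Thus irreducibility of the characteristic polynomial is exactly the statement that $e=1$, and so it suffices to prove that $e=2$ cannot occur when $g$ is odd and $g>2$.

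First I would suppose for contradiction that $e=2$, so that $r$ is odd and $\phi(m)=2g/e=g$. Since $g$ is an odd integer greater than $2$, the immediate obstruction I would exploit is the elementary fact about the Euler totient: for $m>2$, $\phi(m)$ is always even, so $\phi(m)=g$ with $g$ odd is impossible unless $m\in\{1,2\}$, which only yields the trivial real-Weil-number cases (dimension too small / the excluded $(X\pm\sqrt q)^2$ situation). This already rules out $e=2$ for odd $g>2$ purely on totient-parity grounds. I would state this cleanly: the equation $\phi(m)=g$ has no solution with $g$ odd and $g>2$, because $\phi(m)$ is even for all $m\ge 3$ and takes only the values $1$ (for $m=1,2$) otherwise.

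Therefore, for odd $g>2$ we must have $e=1$, and the characteristic polynomial is $\Phi_m^{[\sqrt q]}(X)$ with $\phi(m)=2g$, which is irreducible over $\mathbb{Z}$ since $\Phi_m$ is irreducible and the substitution $X\mapsto X/\sqrt q$ followed by scaling preserves irreducibility over $\mathbb{Q}$ (as already used in the proof of Theorem~\ref{EAV}). I would conclude by assembling these two facts: the only candidate giving a reducible $P_A(X)$ requires $e=2$, hence $r$ odd and $\phi(m)=g$, which the totient-parity argument forbids for odd $g>2$; so $P_A(X)=\Phi_m^{[\sqrt q]}(X)$ is irreducible.

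The main obstacle I anticipate is not the totient-parity step itself, which is routine, but rather verifying carefully that the classification in Theorem~\ref{EAV} genuinely covers \emph{all} simple supersingular abelian varieties in the even case, including any boundary phenomena from the real Weil numbers $\pm\sqrt q$ (which produce the excluded dimension-$1$ square $(X\pm\sqrt q)^2$ and are why the hypothesis $g>2$ is present). I would need to confirm that these degenerate real-root cases, together with the $m\in\{1,2\}$ exceptions, are precisely the configurations excluded by requiring $g$ odd and $g>2$, so that no reducible characteristic polynomial of odd dimension greater than $2$ slips through. Once that coverage is pinned down, the argument reduces to the parity observation and is complete.
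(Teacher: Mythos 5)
Your proposal is correct and takes essentially the same approach as the paper: both reduce the statement to ruling out the exponent $e=2$ in the decomposition $P(X)^e$, $e\in\{1,2\}$, of the characteristic polynomial, and both do so by a parity obstruction on the degree of the underlying irreducible Weil polynomial. The only cosmetic difference is the source of the parity fact — the paper cites that every non-linear Weil polynomial has even degree (complex roots pair with their conjugates), while you invoke the evenness of $\phi(m)$ for $m\ge 3$ through the cyclotomic parametrization of Theorem \ref{EAV}; these amount to the same observation, since in the supersingular even case the degree of the Weil polynomial \emph{is} $\phi(m)$.
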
 
\begin{proof} 
The characteristic polynomials of supersingular simple abelian variety of dimension $g$ are $P(X)^e$, where $e=1$ or $e=2$. When $e=1$, $P(X)$ is a  supersingular Weil polynomial of degree $2g$ and when $e=2$, $P(X)$ is a supersingular Weil polynomial of degree $g$.  Also all non linear Weil polynomials have even degree. Since $g>2$ and $g$ is odd, $e=2$ is not possible  and hence the theorem follows.
\end{proof}

\begin{theorem}\label{Ex1} Let $g>2$. Then there is no simple supersingular abelian  variety over $\mathbb{F}_q$, with $q=p^n$, $n$ even, of dimension $g$ if and only if $\phi^{-1}(g)$  and $\phi^{-1}(2g)$ are empty sets.
\end{theorem}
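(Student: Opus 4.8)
The plan is to read off both directions from the classification of simple supersingular abelian varieties over $\mathbb{F}_q$ with $n$ even that underlies Theorem \ref{EAV}. Recall that when $n$ is even we have $\sqrt{q}\in\mathbb{Z}$, so by the Honda--Tate theorem \ref{H-T} together with the supersingular condition the Frobenius of such a variety $A$ is $\pi_A=\sqrt{q}\,\zeta$ with $\zeta$ a root of unity of some order $m$; its minimal polynomial over $\mathbb{Q}$ is then $\Phi_m^{[\sqrt{q}]}(X)$, of degree $\phi(m)$, and $P_A(X)=\bigl(\Phi_m^{[\sqrt{q}]}(X)\bigr)^e$ with $e\in\{1,2\}$ determined, as in Theorem \ref{EAV}, by the parity of the common degree $r$ of the $\mathbb{Q}_p$-irreducible factors of $\Phi_m$ (namely $e=1$ for $r$ even and $e=2$ for $r$ odd). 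Comparing degrees gives $\dim A=\frac{1}{2}e\,\phi(m)$. Hence a simple supersingular abelian variety of dimension $g$ occurs over some such $\mathbb{F}_q$ if and only if there is an integer $m$ with either $\phi(m)=2g$ and $r$ even (the case $e=1$), or $\phi(m)=g$ and $r$ odd (the case $e=2$).

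The direction ``$\Leftarrow$'' is then immediate: if both $\phi^{-1}(g)$ and $\phi^{-1}(2g)$ are empty there is no admissible $m$ in either case, so no simple supersingular abelian variety of dimension $g$ can occur, for any prime $p$ and any even $n$.

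For ``$\Rightarrow$'' I would argue by contrapositive, showing that whenever $\phi^{-1}(g)\cup\phi^{-1}(2g)\neq\emptyset$ one can in fact realize dimension $g$. Suppose first $\phi^{-1}(2g)\neq\emptyset$ and fix $m$ with $\phi(m)=2g$; since $g>2$ forces $m>2$, the group $U(\mathbb{Z}/m\mathbb{Z})$ is finite abelian of even order $\phi(m)=2g$, so by Cauchy's theorem it contains an element $a$ of even order. By Dirichlet's theorem on primes in arithmetic progressions there is a prime $p\equiv a\pmod m$ (necessarily $p\nmid m$), whose order $r$ in $U(\mathbb{Z}/m\mathbb{Z})$ equals that of $a$ and is therefore even; Theorem \ref{EAV} then yields $e=1$ and a simple supersingular abelian variety of dimension $g$ over $\mathbb{F}_{p^2}$. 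If instead $\phi^{-1}(g)\neq\emptyset$, fix $m$ with $\phi(m)=g$ and choose, again by Dirichlet, a prime $p\equiv 1\pmod m$; then $r=1$ is odd, so $e=2$ and Theorem \ref{EAV} produces a simple supersingular abelian variety of dimension $\frac{1}{2}e\,\phi(m)=\frac{1}{2}\cdot 2\cdot g=g$. In either case dimension $g$ is realized, which completes the contrapositive.

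The bookkeeping (the degree count $\dim A=\frac{1}{2}e\,\phi(m)$, and the observation that $\phi(m)=2g$ makes $|U(\mathbb{Z}/m\mathbb{Z})|$ even) is routine. The one place requiring genuine input is the existence direction, where I must \emph{prescribe the parity} of the factor degree $r$ through the choice of $p$: this is where Cauchy's theorem supplies an element of even order (always available since $\phi(m)$ is even) and Dirichlet's theorem converts the chosen residue class into an actual prime. I expect making this parity-realization fully rigorous to be the main obstacle, and everything else to follow formally from Theorem \ref{EAV}.
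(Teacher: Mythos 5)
Your proposal is correct and follows essentially the same route as the paper: both directions reduce to Theorem \ref{EAV}, the ``if'' direction by noting there is no admissible $m$, and the ``only if'' direction by choosing $m$ with $\phi(m)=2g$ or $\phi(m)=g$ and then a prime $p$ whose order modulo $m$ has the parity forcing $e=1$ or $e=2$ respectively. The only difference is cosmetic: where the paper simply invokes the existence of primes of prescribed order parity (its Remark \ref{Inf}), you justify it explicitly via Cauchy's theorem and Dirichlet's theorem on primes in arithmetic progressions, which is a welcome tightening rather than a new approach.
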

\begin{proof} A supersingular irreducible Weil polynomial of degree $2g$ is given by $\Phi_m^{[\sqrt{q}]}(X)$, where $\phi(m)=2g$. The characteristic polynomials of supersingular simple abelian variety of dimension $g$ are either irreducible supersingular Weil polynomials of degree $2g$ or the square of an irreducible supersingular Weil polynomials of degree $g$. There are no supersingular irreducible Weil polynomials of degree $g$ and $2g$ if  $\phi^{-1}(g)$  and $\phi^{-1}(2g)$ are empty sets, respectively. 
Conversely let $\phi^{-1}(g)$  or $\phi^{-1}(2g)$ be non empty set. If $\phi^{-1}(g)$ is not empty and  $g>2$ this implies $g$ is even. If $\phi(m)=g$, then then by Theorem \ref{EAV} $(\Phi_m^{[\sqrt{q}]}(X))^e$ is a characteristic polynomial of simple supersingular abelian variety over $\mathbb{F}_q$ of dimension $\frac{ge}{2}$, where $e$  the order of $p \mod m +1$ i.e; where $e=1$ or $e=2$. For $p$ such that order of $p$ modulo $m$ is odd, 
 $(\Phi_m^{[\sqrt{q}]}(X))^2$ is a characteristic polynomial of simple supersingular abelian variety over $\mathbb{F}_q$ of dimension $g$.
Similarly if $\phi^{-1}(2g)$ is non empty, for all primes $p$ such that order of $p$ modulo $m$ is even,  and repeating the argument above, we get  $\Phi_m^{[\sqrt{q}]}(X)$ is characteristic polynomial of a simple supersingular abelian variety of dimension $g$. Hence the theorem follows.
\end{proof}
\begin{remark}\label{Inf}
Given an integer  $m>2$, there are infinitely many primes which have an even order modulo $m$ and there are infinitely many primes which have an odd order modulo $m$.
\end{remark}

\begin{theorem}\label{Ex2} Let $g>2$. Then there is no simple supersingular abelian variety over $\mathbb{F}_q$, with $q=p^n$, $n$ odd, of dimension $g$ if  $ \phi^{-1}(g)$  and $\phi^{-1}(2g)$ are all empty sets.
\end{theorem}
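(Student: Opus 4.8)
The plan is to reduce the problem to the classification of minimal polynomials of supersingular Weil numbers $\sqrt{q}\,\zeta_{4t}$ furnished by Theorem \ref{odd}, and then to read off, branch by branch, which fiber of Euler's totient must be nonempty for a given dimension to occur.

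First I would observe that, since $g>2$, the characteristic polynomial $P_A(X)$ of a simple supersingular abelian variety $A$ of dimension $g$ over $\mathbb{F}_q$ (with $n$ odd) must be irreducible of degree $2g$. Indeed, by the first theorem of the odd-case section, $P_A(X)$ is irreducible whenever it has no real root, and the only way a real root can arise is $\pi_A=\pm\sqrt{q}$, whose minimal polynomial $X^2-q$ yields (via $P(X)^2$) a variety of dimension exactly $2$, as recorded in the Remark following that theorem. Because $g>2$, this possibility is excluded, so $e=1$ and $P_A(X)$ is an irreducible supersingular Weil polynomial of degree $2g$; in particular it is the minimal polynomial of $\pi_A=\sqrt{q}\,\zeta_{4t}$ for a suitable $t$.

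Next I would invoke Theorem \ref{odd} and translate the requirement $\deg P_A=2g$ into a totient condition in each branch, using Remark \ref{R1}(1) for the degrees. In the normal case $\phi(4t)=2g$, so $4t\in\phi^{-1}(2g)$. In the exceptional odd-prime branch $t$ is odd with $p\mid t$, hence $\phi(4t)=2\phi(t)$, and $\tfrac{1}{2}\phi(4t)=2g$ becomes $\phi(t)=2g$ with $t$ odd, i.e. $t\in\phi^{-1}(2g)$. In the exceptional even branch $t=2s$ with $s$ odd, so $\phi(4t)=\phi(8s)=4\phi(s)$, and $\tfrac{1}{2}\phi(4t)=2\phi(s)=2g$ forces $\phi(s)=g$ with $s$ odd, i.e. $s\in\phi^{-1}(g)$. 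Thus every branch that could realize dimension $g$ requires either $\phi^{-1}(2g)$ or $\phi^{-1}(g)$ to contain a suitable integer.

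Finally, under the hypothesis $\phi^{-1}(g)=\phi^{-1}(2g)=\emptyset$, no choice of $t$ can produce an irreducible supersingular Weil polynomial of degree $2g$ through any branch of Theorem \ref{odd}, so no simple supersingular abelian variety of dimension $g$ exists over $\mathbb{F}_q$ with $n$ odd. The one point requiring care is the bookkeeping of totient values across branches—specifically, recognizing that the exceptional even-characteristic branch is the only one governed by $\phi^{-1}(g)$ (via $\phi(s)=g$) rather than $\phi^{-1}(2g)$, which is exactly what forces both fibers into the hypothesis. I would also note that, unlike the even-case Theorem \ref{Ex1}, only the stated implication is claimed here, since a converse would additionally require exhibiting a prime $p$ meeting the congruence and divisibility constraints of the exceptional branches (namely $q\equiv 3\bmod 4$ and $p\mid t$), which need not exist for a given $g$.
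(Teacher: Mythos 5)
Your proof is correct and takes essentially the same route as the paper: both arguments combine the irreducibility of $P_A(X)$ for $g>2$ (via the first theorem of the odd-case section and the remark on real roots) with the degree classification of Theorem \ref{odd} and Remark \ref{R1}, and then translate the degree equations into emptiness conditions on the fibers $\phi^{-1}(g)$ and $\phi^{-1}(2g)$. The only difference is cosmetic bookkeeping: the paper collapses the cases using $\phi(4t)=2\phi(2t)$, whereas you track each branch ($4t$, $t$, $s$) separately, which is in fact more explicit about which fiber each branch requires.
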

\begin{proof} 
Let $q=p^n$, $n$ odd. By Theorem \ref{odd} and Remark \ref{R1}, there are no irreducible supersingular Weil polynomial of degree  $2g$ if $\phi(4t)=2g$ with  and $\frac{1}{2}\phi(4t)=2g$ has no solution for $t$. 
If  $2|m$ then we have $\phi(2m)= 2\phi(m)$. Therefore, $\phi(4t)=2g$ and  $\phi(4t)=4g$ has solutions for $t$  if and only if $\phi(2t)=g$  $\phi(2t)=2g$ has solutions for $t$. Therefore  there is no irreducible supersingular Weil polynomial of degree $2g$ if $\phi^{-1}(2g)$ and $\phi^{-1}(g)$ has no solution. Since the characteristic polynomial of a  simple supersingular abelian variety of dimension $g>2$ is irreducible, the result follows.  

\end{proof}
Though the complete characterisation of values of inverse of Euler function is a difficult problem, the above two theorems provide the following partial result. 
\begin{corollary}\label{Exis1}
If $p$ is prime greater than $2$ such that $2p+1$ is not prime(i.e. $p$ is not a Sophie Germain prime) then is no simple supersingular abelian variety of dimension $p$ over finite fields.
\end{corollary}

Therefore it follows from Theorems \ref{Ex1} and \ref{Ex2}, that for the following dimensions $g \leq 100 $ there are no simple supersingular abelian varieties.\\
prime :     $7, 13,17, 19, 31, 37, 43,47,61, 67, 71,73,79, 97. $\\
composite: $ 25, 27 , 34, 38, 45, 57,62, 63, 76,77, 85, 87,91,93, 94, 95.$
\begin{theorem}\label{EXI1}
There are infinitely many positive integers $g$ such that there is no simple supersingular abelian variety of dimension $g$ over finite fields.
\end{theorem}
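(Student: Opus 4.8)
The plan is to reduce the statement to a purely elementary fact about primes by invoking Corollary \ref{Exis1}, which is already available: if $p>2$ is a prime that is \emph{not} a Sophie Germain prime (i.e. $2p+1$ is composite), then there is no simple supersingular abelian variety of dimension $p$ over any finite field. Consequently it suffices to prove that there are infinitely many primes $p$ for which $2p+1$ fails to be prime; each such $p$ then furnishes a dimension $g=p$ admitting no simple supersingular abelian variety, and distinct such primes give distinct dimensions.

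First I would exhibit an explicit infinite arithmetic progression of non-Sophie-Germain primes. The key observation is the congruence trick: if $p\equiv 1 \bmod 3$ and $p>3$, then $2p+1\equiv 2\cdot 1+1\equiv 0 \bmod 3$, while $2p+1>3$, so $2p+1$ is divisible by $3$ and strictly larger than $3$, hence composite. Thus every prime $p\equiv 1 \bmod 3$ with $p>3$ is not a Sophie Germain prime. By Dirichlet's theorem on primes in arithmetic progressions there are infinitely many primes $p\equiv 1 \bmod 3$, so there are infinitely many non-Sophie-Germain primes, and the theorem follows by Corollary \ref{Exis1}. (One may replace Dirichlet's theorem by the elementary fact that infinitely many primes divide the values of $X^2+X+1$ and are therefore $\equiv 1 \bmod 3$, if a self-contained argument is preferred.)

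The only point requiring care is that Corollary \ref{Exis1} is genuinely applicable, i.e. that for these primes both $\phi^{-1}(p)$ and $\phi^{-1}(2p)$ are empty, as required by Theorems \ref{Ex1} and \ref{Ex2}. Here $p$ is odd and exceeds $2$, so $p$ is a nontotient automatically (the only odd value of Euler's function is $1$); and since $2p\equiv 2 \bmod 4$, a totient of this shape must equal $q^{b-1}(q-1)$ for a prime $q\equiv 3 \bmod 4$, whose minimal instance is $q=2p+1$, ruled out precisely when $2p+1$ is composite. I expect no real obstacle in this argument: the content is entirely supplied by Corollary \ref{Exis1}, and the remaining work is the short and standard production of infinitely many non-Sophie-Germain primes, for which the mod-$3$ progression is the cleanest device. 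An alternative route, should one wish to avoid the prime-dimension framing, is to invoke Theorems \ref{Ex1} and \ref{Ex2} directly and produce infinitely many $g$ with $g$ and $2g$ both nontotients; the odd-$g$ family constructed above is the simplest such source.
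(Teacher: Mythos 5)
Your proposal is correct and takes the same route as the paper: invoke Corollary \ref{Exis1} to reduce the theorem to producing infinitely many primes $p>2$ with $2p+1$ composite, then exhibit such primes in a fixed arithmetic progression. In fact your version repairs an error in the paper's own proof. The paper claims that primes $p\equiv 5 \bmod 6$ satisfy $2p+1\equiv 4 \bmod 6$; this is false, since $p\equiv 5 \bmod 6$ gives $2p+1\equiv 11\equiv 5 \bmod 6$ (and indeed that residue class contains Sophie Germain primes such as $p=5,11,23$, for which $2p+1=11,23,47$ are prime). The correct progression is the one you use: $p\equiv 1 \bmod 3$ (equivalently $p\equiv 1\bmod 6$ for $p>3$), for which $2p+1\equiv 0 \bmod 3$ and $2p+1>3$, so $2p+1$ is composite; Dirichlet's theorem (or your elementary substitute via prime divisors of $X^2+X+1$) supplies infinitely many such $p$. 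Your closing paragraph double-checking that $\phi^{-1}(p)$ and $\phi^{-1}(2p)$ are empty is not logically needed once Corollary \ref{Exis1} is taken as given, but it is sound for $p>3$ (the case $p=3$, where $\phi(9)=6$, is excluded by your progression anyway) and makes the argument self-contained.
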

\begin{proof}
We will show the Theorem using the fact that by there are infinitely many primes satisfying Corollary \ref{Exis1}.
It is well known that there are infinitely many primes congruent to $5 \mod 6$ which implies $2p+1 \equiv 4 \mod 6$. Hence $2p+1$ in this family is not prime and the Theorem follows.
\end{proof}
The following result is in contrast to Theorem \ref{EXI1}.
\begin{theorem}
There are infinitely many positive integers $g$ such that there are simple supersingular abelian varieties of dimension $g$.
\end{theorem}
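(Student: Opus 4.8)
The plan is to reduce the statement to a counting fact about the image of Euler's totient function, using the existence machinery already in place. Concretely, whenever $\phi^{-1}(2g)$ is non-empty, say $\phi(m)=2g$, the polynomial $\Phi_m^{[\sqrt{q}]}(X)$ is an irreducible supersingular Weil polynomial of degree $2g$, and by Theorem \ref{EAV} it is the characteristic polynomial of a simple supersingular abelian variety of dimension $g$ over $\mathbb{F}_q$ (with $q=p^n$, $n$ even) precisely when the order $r$ of $p$ in $U(\mathbb{Z}/m\mathbb{Z})$ is even, so that the exponent $e$ equals $1$. Thus it suffices to produce infinitely many $g$ for which $\phi^{-1}(2g)$ is non-empty and for which one can select a prime $p$ of even order modulo the corresponding $m$.

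First I would exhibit an explicit infinite family of admissible dimensions. Taking $g=2^{k}$ for $k\ge 1$ and $m=2^{k+2}$, one has $\phi(m)=\phi(2^{k+2})=2^{k+1}=2g$, so $\phi^{-1}(2g)$ is non-empty for every such $g$; since the values $2^{k}$ are pairwise distinct, this already yields infinitely many candidate dimensions. More generally any infinite subset of the image of $\phi$ (all of whose elements beyond $\phi(1)$ and $\phi(2)$ are even) would serve, but the powers of two make the argument fully explicit and sidestep the difficult question of characterizing the totient values.

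Next I would fix the parity of the splitting degree $r$. For each $m=2^{k+2}$, Remark \ref{Inf} guarantees infinitely many primes $p$ whose order modulo $m$ is even; choosing any such $p$ forces $e=1$ in Theorem \ref{EAV}, so $\Phi_m^{[\sqrt{q}]}(X)$ is the characteristic polynomial of a simple supersingular abelian variety of dimension exactly $g=2^{k}$ over a suitable $\mathbb{F}_q$ with $q=p^n$ and $n$ even. Assembling these constructions across all $k\ge 1$ produces simple supersingular abelian varieties in infinitely many distinct dimensions, which is the assertion.

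The only genuine subtlety, and the step I would treat most carefully, is the control of the exponent $e$: a priori a prime of odd order modulo $m$ would only produce a variety of dimension $2g$ rather than $g$. This is exactly where Remark \ref{Inf} is essential, since it supplies primes of even order and pins the dimension to $g$. I should note that even without this refinement the theorem still follows, because the resulting dimensions $2g=2^{k+1}$ again form an infinite family of powers of two; invoking Remark \ref{Inf} simply lets me name the dimensions $2^{k}$ directly and keeps the bookkeeping transparent.
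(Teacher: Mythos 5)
Your proof is correct and follows essentially the same route as the paper: non-emptiness of $\phi^{-1}(2g)$ for infinitely many $g$, combined with Theorem \ref{EAV} (equivalently Theorem \ref{Ex1}) and the even-order primes supplied by Remark \ref{Inf} to force $e=1$ over a field of square order. The only difference is presentational: you instantiate the infinitude explicitly with the family $g=2^{k}$, $m=2^{k+2}$ (and add a fallback in case $e=2$), whereas the paper simply asserts that infinitely many such $g$ exist.
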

\begin{proof} There are infinitely many positive integers $g$ such that $ \phi^{-1}(g)$  or $\phi^{-1}(2g)$ are not all empty sets. From  Theorem \ref{Ex1} and Remark \ref{Inf}, there is a supersingular simple abelian variety of dimension $g$ over $\mathbb{F}_q$, where $q$ is a square for infinitely many primes $p$ and hence this result follows.
\end{proof}
\bigskip
Let $G_{q,g}$ denote the number of isogeny classes of supersingular simple abelian varieties of dimension $g$ over $\mathbb{F}_{q}$ and 
$A(m):=\#\{x| \phi(x)=m\}$.
\begin{theorem}
If $g>1$ and $q=p^n,~ n$ even, then
 $$G_{q,g}= A(2g)(o(p,2g)+1)+ A(g)(o(p,g)),$$
 where $o(p,k)=$ the order of $p \mod k$, taken$\mod 2$.
\end{theorem}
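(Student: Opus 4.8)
The plan is to count isogeny classes through the Honda--Tate correspondence (Theorem~\ref{H-T}) and then read off the dimension of each class from Theorem~\ref{EAV}. First I would record that, since $n$ is even, every supersingular Weil $q$-number has the form $\sqrt{q}\,\zeta$ with $\sqrt{q}\in\mathbb{Z}$ and $\zeta$ a root of unity; the absolute Galois group acts transitively on the primitive $m$-th roots of unity while fixing $\sqrt{q}$, so the conjugacy class of $\sqrt{q}\,\zeta$ depends only on the order $m$ of $\zeta$ and has minimal polynomial $\Phi_m^{[\sqrt{q}]}(X)$. By Theorem~\ref{H-T} the isogeny classes of simple supersingular abelian varieties over $\mathbb{F}_q$ are therefore indexed by the integers $m$ with $\phi(m)$ even, and by Theorem~\ref{EAV} the class attached to $m$ has dimension $g_0 e$, where $\phi(m)=2g_0$ and $e\in\{1,2\}$ is determined by the parity of the integer $r=r(m,p)$ defined there.

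Next I would split the count $G_{q,g}$ according to the value of $e$. A simple supersingular abelian variety of dimension $g>1$ has characteristic polynomial $(\Phi_m^{[\sqrt{q}]}(X))^e$ with $g_0 e=g$, so either $e=1$ and $\phi(m)=2g$, or $e=2$ and $\phi(m)=g$. Hence
$$G_{q,g}=\#\{m:\phi(m)=2g,\ r(m,p)\text{ even}\}+\#\{m:\phi(m)=g,\ r(m,p)\text{ odd}\}.$$
Thus the whole problem reduces to controlling, for a fixed prime $p$, the parity of $r(m,p)$ as $m$ ranges over the finite fibres $\phi^{-1}(2g)$ and $\phi^{-1}(g)$, whose cardinalities are exactly $A(2g)$ and $A(g)$.

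The heart of the argument is a parity lemma: I would show that for $p\nmid m$ the parity of $r(m,p)=\mathrm{ord}_m(p)$ is governed by $o(p,2g)$ on the fibre $\phi^{-1}(2g)$ and by $o(p,g)$ on the fibre $\phi^{-1}(g)$, and that the ramified cases $m=p^k s$ (where $r=f(p^k-p^{k-1})$) contribute an even $r$ except in the isolated situation $p=2,\ k=1$, which must be inspected directly. Granting this, each $m\in\phi^{-1}(2g)$ either contributes once (when the relevant order is even, giving $e=1$) or is paired by the substitution $X\mapsto -X$ with its sign-conjugate class, via $\Phi_m^{[\sqrt{q}]}(-X)=\Phi_{m'}^{[\sqrt{q}]}(X)$ with $\phi(m')=\phi(m)$ (when the order is odd); this pairing is what I would use to produce the multiplier $o(p,2g)+1$ in front of $A(2g)$, while the fibre $\phi^{-1}(g)$ supplies the term $A(g)\,o(p,g)$ coming from the classes with $e=2$.

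I expect the parity lemma to be the main obstacle. Relating $\mathrm{ord}_m(p)$ for every $m\in\phi^{-1}(k)$ to $\mathrm{ord}_k(p)$ is delicate, because the map $m\mapsto\mathrm{ord}_m(p)\bmod 2$ is not obviously constant on a fibre of $\phi$; the argument must exploit the specific structure of $U(\mathbb{Z}/m\mathbb{Z})$ for the various $m$ sharing a common value of $\phi$, together with the behaviour of the order under the sign-conjugation involution $m\leftrightarrow m'$. The remaining bookkeeping --- excluding the real Weil numbers $\pm\sqrt{q}$, which occur only in dimension $1$ and are ruled out by $g>1$, and handling the finitely many primes $p\mid 2g$ or $p\mid g$ separately --- is routine once the parity lemma is established.
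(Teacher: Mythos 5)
Your first two paragraphs are correct, and they constitute essentially the whole of the paper's own argument: the paper's proof of this theorem is the single sentence that it ``follows easily from Theorem \ref{EAV}'', which amounts to exactly your reduction (via Theorem \ref{H-T} and Theorem \ref{EAV}, after discarding the real Weil numbers $\pm\sqrt{q}$) to the identity
$$G_{q,g}=\#\{m:\phi(m)=2g,\ r(m,p)\text{ even}\}+\#\{m:\phi(m)=g,\ r(m,p)\text{ odd}\}.$$
That displayed identity is the correct count, and your derivation of it is sound.

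The gap is the ``parity lemma'' you defer to the end, and it is not merely delicate --- it is false, and with it the theorem as printed. The parity of $\mathrm{ord}_m(p)$ is \emph{not} constant on a fibre of $\phi$, so it cannot be read off from $o(p,2g)$ and $o(p,g)$ alone. Take $g=3$, $p=11$, $q=121$: then $\phi^{-1}(6)=\{7,9,14,18\}$, $\phi^{-1}(3)=\emptyset$, and the orders of $11$ modulo $7,9,14,18$ are $3,6,3,6$ respectively, so two elements of the fibre have odd order and two have even order. Hence $G_{121,3}=2$ (only $m=9,18$ give $e=1$; this agrees with the paper's own dimension-$3$ table, whose congruence conditions exclude $p\equiv 4 \bmod 7$ and $p\equiv 11\bmod 14$), whereas the stated formula yields $A(6)\bigl(o(11,6)+1\bigr)=4(0+1)=4$. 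Note also that no correct formula can carry the factor $o(p,2g)+1\in\{1,2\}$ in front of $A(2g)$: by Honda--Tate each $m$ yields exactly one simple isogeny class, of a single dimension, so $G_{q,g}\le A(2g)+A(g)$ always, a bound the printed formula violates whenever $o(p,2g)=1$ and $A(2g)>0$. For the same reason your proposed $X\mapsto -X$ pairing cannot manufacture a multiplier of $2$ (and for $4\mid m$ it is vacuous, since then $\Phi_m(-X)=\Phi_m(X)$). The statement your argument actually proves --- and, one presumes, what the authors intended --- is the fibre-wise sum
$$G_{q,g}=\sum_{m:\ \phi(m)=2g}\bigl(1-\epsilon_p(m)\bigr)+\sum_{m:\ \phi(m)=g}\epsilon_p(m),$$
where $\epsilon_p(m)\in\{0,1\}$ is the parity of the integer $r$ attached to $m$ in Theorem \ref{EAV} (equal to $\mathrm{ord}_m(p)\bmod 2$ when $p\nmid m$, and computable, indeed almost always $0$, when $p\mid m$). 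There is no further collapse to the fibre cardinalities $A(2g)$, $A(g)$; so rather than trying to prove the parity lemma, you should stop at your displayed identity and record it as the corrected theorem.
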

\begin{proof}
It follows easily from Theorem \ref{EAV}.
\end{proof}
\begin{theorem}
If$g>2$ $q=p^n,~ n$ odd, then
 $$G_{q,g}\leq((-1)^{g+1}+1)A(2g)+ 2\sum_{n \in A(2g)}~\omega(n),$$
 where $\omega(n)$  is the number of distinct prime factors of $n$.
\end{theorem}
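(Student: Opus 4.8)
The final statement to prove is the upper bound
$$G_{q,g}\leq((-1)^{g+1}+1)A(2g)+ 2\sum_{n \in \phi^{-1}(2g)}\omega(n)$$
for $g>2$ and $q=p^n$ with $n$ odd, where $G_{q,g}$ counts isogeny classes of supersingular simple abelian varieties of dimension $g$.

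\textbf{Overall approach.} The plan is to count supersingular Weil polynomials that can serve as characteristic polynomials, exploiting the Honda--Tate bijection (Theorem \ref{H-T}) which turns ``isogeny classes'' into ``conjugacy classes of Weil numbers'', i.e. into minimal Weil polynomials. Since $g>2$ and $n$ is odd, the characteristic polynomial of a simple supersingular abelian variety of dimension $g$ is irreducible of degree $2g$ (this is the content of the first theorem of the Odd case section, together with the earlier remark ruling out the repeated-factor possibility for $g>2$). So I would reduce the problem to counting irreducible supersingular Weil polynomials of degree $2g$ over $\mathbb{F}_q$, and then bound that count using the classification in Theorem \ref{odd}.

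\textbf{Key steps.} First I would split the count according to the two cases of Theorem \ref{odd}. In the \emph{normal case} the minimal polynomial is $\Phi_{4t}^{[\sqrt{q}]}(X)$ with $\phi(4t)=2g$; since $\phi(4t)=2\phi(2t)=2g$ exactly when $2t\in\phi^{-1}(2g)$ already handled, I would instead organize everything through $\phi^{-1}(2g)$ directly by noting $4t$ ranges over those $m\in\phi^{-1}(2g)$ divisible by $4$, and count them. The factor $A(2g)=\#\phi^{-1}(2g)$ will absorb these, and the sign factor $(-1)^{g+1}+1$ (which equals $2$ when $g$ is odd and $0$ when $g$ is even) should encode a parity/reality constraint on when the normal-case polynomials actually occur for odd $n$. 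Second, in the \emph{exceptional case} the minimal polynomial is one of $\Psi_{p,t/p}^{[\sqrt{q^*}]}$, $\Psi_{2,s}^{[\sqrt{q}]}$, $\Psi_{-2,s}^{[\sqrt{q}]}$, with degree $\tfrac12\phi(4t)=2g$, hence $\phi(4t)=4g$. Here each $\Psi$-polynomial comes with a $\pm$ sign ambiguity (Remark \ref{R1}(4): replacing $\zeta_{pt}$ by a non-residue power sends $\Psi_{p,t}(X)$ to $\Psi_{p,t}(-X)$), which is exactly where the factor of $2$ multiplying the sum appears. The term $\sum_{n\in\phi^{-1}(2g)}\omega(n)$ should count, via the recursive factorization in Remark \ref{R1}(5) and the structure $\Psi_{p,t/p}$, the admissible choices of the prime $p$ dividing the conductor: each such choice corresponds to a distinct prime factor of some $n$ with $\phi(n)=2g$, giving $\omega(n)$ options.

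\textbf{Main obstacle.} The delicate point is bookkeeping the correspondence between the combinatorial data $(m=4t,\ p,\ \text{sign})$ and the actual distinct isogeny classes, making sure the count is an honest upper bound rather than exact. In particular I expect the inequality (rather than equality) to arise because not every formal choice of $(p,\text{sign})$ yields a polynomial realizable over a field $\mathbb{F}_q$ with $q=p^n$ and $n$ odd, and because different choices can occasionally coincide or fail the congruence conditions ($q\equiv 3\bmod 4$, $t$ odd, $p\mid t$) listed in Theorem \ref{odd}. I would therefore bound liberally: allow all $m\in\phi^{-1}(2g)$ in the normal case (weighted by the parity factor), and in the exceptional case allow every prime divisor of every $n\in\phi^{-1}(2g)$ together with both signs, which overcounts but cleanly produces $2\sum_{n\in\phi^{-1}(2g)}\omega(n)$. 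Verifying that this liberal allocation genuinely dominates the true count of realizable irreducible supersingular Weil polynomials—matching the exceptional-case degree condition $\phi(4t)=4g$ back to $\phi^{-1}(2g)$ via $\phi(4t)=2\phi(2t)$—is the step requiring the most care.
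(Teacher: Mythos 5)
Your overall reduction---via Honda--Tate and the irreducibility of the characteristic polynomial for $g>2$, $n$ odd, followed by a split into the normal and exceptional cases of Theorem \ref{odd}---is exactly the paper's strategy; the paper's own proof is nothing more than a one-sentence statement of that plan. The genuine gap is in how you allocate the two terms of the bound, and it is fatal precisely for even $g$. You assign the term $((-1)^{g+1}+1)A(2g)$ to the normal case, reading the parity factor as recording "when the normal-case polynomials actually occur." This is backwards. The normal case requires a multiple of $4$ in $\phi^{-1}(2g)$, and for odd $g>1$ there are none: if $m=4u$ with $u$ odd then $\phi(m)=2\phi(u)$ forces $\phi(u)=g$ odd, impossible for $u>2$, and if $8\mid m$ then $4\mid\phi(m)$ while $2g\equiv 2 \bmod 4$. (This is why the paper's dimension $3$, $5$, $7$ sections say "there is no such $t$" in the normal case.) So the normal case is empty exactly when your allocation grants it $2A(2g)$, and it is nonempty in general exactly when your allocation grants it $0$: for $g=4$ the polynomials $X^8+q^4$, $\Phi_{20}^{[\sqrt{q}]}(X)$, $\Phi_{20}^{[\sqrt{-q}]}(X)$, $\Phi_{24}^{[\sqrt{q}]}(X)$ are all normal. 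Consequently your "liberal allocation" does not dominate the true count for even $g$, which is the nontrivial half of the inequality.

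To close the gap for even $g$ the normal case must be charged to the second term, not the first: the normal indices (multiples of $4$ in $\phi^{-1}(2g)$) and the exceptional indices for odd $p$ (odd elements $t\in\phi^{-1}(2g)$ with $p\mid t$; note $\phi(4t)=2\phi(t)$ for odd $t$) are \emph{disjoint} subsets of $\phi^{-1}(2g)$, and each index carries at most two isogeny classes---in the normal case both $\Phi_{4t}^{[\sqrt{q}]}$ and $\Phi_{4t}^{[\sqrt{-q}]}$ can occur (your count of one per normal index is a second slip: $X^4-qX^2+q^2$ and $X^4+qX^2+q^2$ are distinct classes already in dimension $2$), and in the exceptional case the two sign choices of $\Psi$. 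Disjointness gives a total of at most $2A(2g)\le 2\sum_{n\in\phi^{-1}(2g)}\omega(n)$, using $\omega(n)\ge 1$. Finally, $p=2$ does not fit your scheme at all: its exceptional indices satisfy $\phi(s)=g$ rather than $\phi(s)=2g$ (the degree there is $\tfrac12\phi(8s)=2\phi(s)$), so they live in $\phi^{-1}(g)$ and must be embedded into $\phi^{-1}(2g)$ by $s\mapsto 4s$; the image $4s$ is simultaneously a normal index, and the needed room comes from $\omega(4s)=1+\omega(s)\ge 2$, which is what lets the single term $2\sum_{n\in\phi^{-1}(2g)}\omega(n)$ absorb both contributions. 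None of this bookkeeping appears in your proposal, and without it the stated bound is not established.
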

\begin{proof}
The proof follows from counting the supersingular irreducible Weil polynomial of degree $2g$ both in exceptional and normal case from the Theorem \ref{odd}. 

\end{proof}

\section*{Acknowledgements}
We would like to thank Kevin Hutchison, H.G. R\"{u}ck and  Christophe Ritzenthaler for the useful suggestions.

\renewcommand{\bibname}{Bibliography}
\bibliographystyle{IEEEtran}
\bibliography{vijay_bib_data}

\end{document}